\newcommand{\B}{\mathcal{B}}
\newcommand{\J}{\mathcal{J}}
\newcommand{\mcO}{I}
\newcommand{\T}{\mathcal{T}}
\newcommand{\jag}{\mathrm{jag}}
\newcommand{\Poly}{\mathcal{P}}
\newcommand{\Orb}{\mathcal{O}}
\newcommand{\ra}{\rightarrow}
\newcommand{\NN}{\mathbb{N}}
\newcommand{\ZZ}{\mathbb{Z}}
\newcommand{\RR}{\mathbb{R}}
\newcommand{\EE}{\mathbb{E}}
\newcommand{\PP}{\mathbb{P}}
\newcommand{\Ind}{\mathbf{1}}
\newcommand{\lin}{\mathrm{lin}}
\newcommand{\unif}{\mathrm{unif}}
\newcommand{\rk}{\mathrm{rk}}
\newcommand{\col}{\colon}
\newcommand{\coleq}{\coloneqq}
\newtheorem{thm}{Theorem}[section]
\newtheorem{lemma}[thm]{Lemma}
\newtheorem{cor}[thm]{Corollary}
\newtheorem{prop}[thm]{Proposition}
\newtheorem{Definition}[thm]{Definition}
\newenvironment{definition}
  {\begin{Definition}\rm}{\end{Definition}}
\newtheorem{Example}[thm]{Example}
\newenvironment{example}
  {\begin{Example}\rm}{\end{Example}}
\newtheorem{Remark}[thm]{Remark}
\newenvironment{remark}
  {\begin{Remark}\rm}{\end{Remark}}
\numberwithin{equation}{section}
\apptocmd{\sloppy}{\hbadness 10000\relax}{}{}
\title{The expected jaggedness of order ideals}
\author[M. Chan]{Melody Chan}\address{Department of Mathematics, Harvard University, Cambridge, MA 02138}\email{mtchan@math.harvard.edu}
\author[S. Haddadan]{Shahrzad Haddadan}\address{6211 Sudikoff Lab, Dartmouth College, Hanover, NH 03755}\email{shahrzad@cs.dartmouth.edu}
\author[S. Hopkins]{Sam Hopkins}\address{Department of Mathematics, MIT, Cambridge, MA 02139}\email{shopkins@mit.edu}
\author[L. Moci]{Luca Moci}\address{IMJ-PRG, Universit\'{e} Paris-Diderot Paris 7, Paris, France}\email{lucamoci@hotmail.com}
\begin{document}

\begin{abstract}
The {\em jaggedness} of an order ideal $I$ in a poset $P$ is the number of maximal elements in $I$ plus the number of minimal elements of $P$ not in $I$.  A probability distribution on the set of order ideals of $P$ is {\em toggle-symmetric} if for every $p\in P$, the probability that $p$ is maximal in $I$ equals the probability that $p$ is minimal not in $I$.  In this paper, we prove a formula for the expected jaggedness of an order ideal of $P$ under any toggle-symmetric probability distribution when $P$ is the poset of boxes in a skew Young diagram.  Our result extends the main combinatorial theorem of Chan-L\'opez-Pflueger-Teixidor \cite{chan2015genera}, who used an expected jaggedness computation as a key ingredient to prove an algebro-geometric formula; and it has applications to homomesies, in the sense of Propp-Roby, of the antichain cardinality statistic for order ideals in partially ordered sets.
\end{abstract}

\maketitle

\section{Introduction}\label{s:intro}

Consider an $a\times b$ grid, and consider the set of~$\binom{a+b}{a} $ lattice paths from the lower-left corner of this grid to the upper-right corner.  Define a probability distribution on this set as follows: a path occurs with probability proportional to the number of $a\times b$ standard Young tableaux with which it is compatible. (We say a path $s$ is compatible with a tableau~$T$ if all the labels of $T$ northwest of $s$ are smaller than all of the labels of~$T$ southeast of~$s$.  We use English notation throughout.)  We will call this distribution~$\mu_{\lin}$, the {\em linear} distribution on lattice paths, since it comes from linear orderings of the $ab$ boxes in the grid.
For example, the six lattice paths in a $2\times 2$ grid occur in $\mu_{\lin}$ with the probabilities shown below:

\medskip

\begin{center}
\begin{tikzpicture}
\begin{scope}[shift={(-5,0)},scale=1.0]
\node at (0,0) {\ydiagram{2,2}};
\node at (0,-1.2) {$1/5$};
\def\x{0.6}
\draw[red,ultra thick] (-1*\x,-1*\x) -- (-1*\x,1*\x) -- (1*\x,1*\x);
\end{scope}
\begin{scope}[shift={(-3,0)},scale=1.0]
\node at (0,0) {\ydiagram{2,2}};
\node at (0,-1.2) {$1/5$};
\def\x{0.6}
\draw[red,ultra thick] (-1*\x,-1*\x) -- (-1*\x,0) -- (0,0) -- (0,1*\x) -- (1*\x,1*\x);
\end{scope}
\begin{scope}[shift={(-1,0)},scale=1.0]
\node at (0,0) {\ydiagram{2,2}};
\node at (0,-1.2) {$1/10$};
\def\x{0.6}
\draw[red,ultra thick] (-1*\x,-1*\x) -- (-1*\x,0) -- (1*\x,0) -- (1*\x,1*\x);
\end{scope}
\begin{scope}[shift={(1,0)},scale=1.0]
\node at (0,0) {\ydiagram{2,2}};
\node at (0,-1.2) {$1/10$};
\def\x{0.6}
\draw[red,ultra thick] (-1*\x,-1*\x) -- (0,-1*\x) -- (0,1*\x) -- (1*\x,1*\x);
\end{scope}
\begin{scope}[shift={(3,0)},scale=1.0]
\node at (0,0) {\ydiagram{2,2}};
\node at (0,-1.2) {$1/5$};
\def\x{0.6}
\draw[red,ultra thick] (1*\x,1*\x) -- (1*\x,0) -- (0,0) -- (0,-1*\x) -- (-1*\x,-1*\x);
\end{scope}
\begin{scope}[shift={(5,0)},scale=1.0]
\node at (0,0) {\ydiagram{2,2}};
\node at (0,-1.2) {$1/5$};
\def\x{0.6}
\draw[red,ultra thick] (-1*\x,-1*\x) -- (1*\x,-1*\x) -- (1*\x,1*\x);
\end{scope}
\end{tikzpicture}
\end{center}

\noindent We may ask: what is the expected {\em jaggedness} of a lattice path, chosen according to~$\mu_{\lin}$?  That is, what is the expected number of turns of such a lattice path?  The answer is surprisingly simple.  

\begin{thm}\label{t:clpt}
The expected jaggedness of a lattice path in an $a\times b$ grid, chosen under the distribution~$\mu_{\lin}$, is exactly $2ab/(a\!+\!b)$, the harmonic mean of $a$ and $b$.  
\end{thm}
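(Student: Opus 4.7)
The plan is to deduce Theorem~\ref{t:clpt} as the rectangular special case of the paper's main theorem, which (per the abstract) gives a formula for the expected jaggedness of any toggle-symmetric probability distribution on the order ideals of a skew Young diagram poset. Granting this general theorem, two tasks remain: first, to verify that $\mu_{\lin}$ is toggle-symmetric on the poset $P$ of boxes of the $a\times b$ rectangle; second, to evaluate the general formula on the rectangle and confirm it simplifies to $2ab/(a+b)$.

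For the toggle-symmetry check, I would first rewrite $\mu_{\lin}$ as the marginal law of the random order ideal $I^T_k := \{p \in P : T(p) \leq k\}$, where $T$ is a uniformly random standard Young tableau of the $a\times b$ rectangle and $k$ is uniform on $\{0, 1, \ldots, ab\}$; this identification is immediate from the definition of $\mu_{\lin}$ and the observation that each SYT is compatible with exactly one lattice path at each level, so that $\mu_{\lin}(I) = f^I f^{P\setminus I} / ((ab+1) f^P)$. For a fixed $T$ and box $p$, the number of levels $k$ at which $p$ is maximal in $I^T_k$ equals $\min_{q \gtrdot p} T(q) - T(p)$ (with the convention that the minimum is $ab+1$ when $p$ has no cover above), while the number of levels at which $p$ is minimal in $P \setminus I^T_k$ equals $T(p) - \max_{q \lessdot p} T(q)$ (with the maximum taken to be $0$ when $p$ has no cover below). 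Toggle-symmetry at $p$ thus reduces to the SYT identity
\[
\EE_T\!\left[\min_{q \gtrdot p} T(q)\right] + \EE_T\!\left[\max_{q \lessdot p} T(q)\right] = 2\,\EE_T\!\bigl[T(p)\bigr].
\]
I would try to establish this via a combinatorial involution on SYT that, for each fixed $p$, swaps $p$'s above-cover and below-cover data. The $180^{\circ}$ rotation of the rectangle induces a natural involution on SYT and provides part of the picture, but it pairs $p$ with $\mathrm{rot}(p)$ rather than fixing $p$, yielding only an averaged form of the identity; a finer argument---perhaps via Sch\"utzenberger evacuation, a Bender--Knuth-type local swap, or a direct computation using hook-length formulas---is likely needed to obtain the pointwise statement at each $p$.

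Once toggle-symmetry is established, applying the general formula to the rectangle should be a short calculation: because the rectangle has a particularly simple boundary shape (essentially two corners), I expect the sum appearing in the main theorem to collapse cleanly to $2ab/(a+b)$. The main obstacle I foresee is precisely pointwise toggle-symmetry: the rotation argument readily delivers an averaged identity, and bridging the gap to the per-$p$ statement is the key combinatorial challenge; by contrast, the specialization step to the rectangle should be essentially routine once the general formula is available.
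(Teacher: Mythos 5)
Your overall route is the paper's: Theorem~\ref{t:clpt} is obtained by specializing Theorem~\ref{thm:main} to the rectangle once one knows that $\mu_{\lin}$ is toggle-symmetric (Corollary~\ref{cor:linsym}), and your reduction of toggle-symmetry at a box $p$ to the identity $\EE_T[\min_{q\gtrdot p}T(q)] + \EE_T[\max_{q\lessdot p}T(q)] = 2\,\EE_T[T(p)]$ over uniformly random standard Young tableaux is correct and matches the structure of the paper's argument. The one genuine gap is that you leave this identity unproved; as you note, $180^\circ$ rotation only yields the statement averaged over $p$ and its rotation. The missing ingredient is exactly the ``shuffle'' involution $\sigma_p$ of Remark~\ref{r:shuffles}: with $x=\ell(p)$, $M=\max\{\ell(r)\colon r\lessdot p\}$, $m=\min\{\ell(r)\colon p\lessdot r\}$ (conventions $M=0$, $m=n+1$ at the boundary), relabel $p$ by $x'=M+m-x$ and shift the labels strictly between $x$ and $x'$ by one to close the gap. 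Since $m-x'=x-M>0$ and $x'-M=m-x>0$, no cover of $p$ has a label in the shifted range, so $M$ and $m$ are preserved while $\ell(p)$ is reflected to $M+m-\ell(p)$; this involution on linear extensions gives the pointwise identity at every $p$ --- it is precisely the ``Bender--Knuth-type local swap'' you were looking for. (The paper also offers a second route: prove toggle-symmetry of the weak distributions $\mu_{m,\le}$ by the analogous involution on $P$-partitions, Lemma~\ref{lem:ppartsym}, and then pass to the limit $\mu_{m,\le}\to\mu_{\lin}$ via Proposition~\ref{prop:limit}.)

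One small correction on the specialization step: in the sense of Definition~\ref{def:outward} the rectangle has \emph{no} outward corners at all --- every turn of its boundary borders a box of the shape --- so the correction term in \eqref{e:main} is an empty sum rather than a sum over ``essentially two corners'' that happens to collapse. (Even if one insisted on counting the turns at $(a,0)$ and $(0,b)$, these lie on the main anti-diagonal and would have displacement $0$.) With $C(\sigma)=\emptyset$ the formula reads $\EE_{\mu}(\jag)=2ab/(a+b)$ for \emph{every} toggle-symmetric $\mu$, which is the content of Corollary~\ref{cor:main} for balanced shapes and in particular covers both $\mu_{\lin}$ and $\mu_{\unif}$ on the rectangle.
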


Theorem~\ref{t:clpt} and a generalization thereof appeared recently in \cite{chan2015genera} as the key combinatorial result underlying the computation of the genera of {\em Brill-Noether curves} (Brill-Noether loci of dimension 1).  Briefly: it is used to compute the average vertex degree in the dual graph of a nodal degeneration, parametrizing Eisenbud-Harris limit linear series, of a given Brill-Noether curve.  Noted in \cite{chan2015genera} is the unexpected appearance of the harmonic mean, as well as the observation that if the distribution $\mu_{\lin}$ is replaced by the {\em uniform} distribution, the answer is still the harmonic mean.

The purpose of this paper is to give a vast generalization of Theorem~\ref{t:clpt}, in particular explaining the seeming coincidence above, and putting it in its proper combinatorial context: order ideals in arbitrary posets, and {\em toggle-symmetric} probability distributions on them.  This last is a class of probability distributions that we would like to put forth as an interesting property to study, especially in relation to the developing area of {dynamical algebraic combinatorics}.  We define toggle-symmetric distributions on order ideals of posets, and give, with proof, many natural examples, in Section~\ref{sec:togglesym}.  The word ``toggle'' refers to the procedure of adding or removing an element from a set if it is permissible to do so.  The term was coined by Striker-Williams \cite{striker2015toggle} in describing Cameron and Fon-Der-Flaass' involutions on sets of order ideals of posets \cite{cameron1995orbits}. Indeed, our results have direct applications to homomesy results for order ideals under special compositions of toggles, as we will discuss.

In Section~\ref{sec:exp}, we prove our main result: a formula for expected jaggedness that applies to all skew Young diagrams, not just rectangles, and any toggle-symmetric distribution.  Here is our main theorem:
\begin{thm}\label{thm:main_intro}
Let $\sigma$ be a connected skew shape with height $a$ and width $b$.  Let $\mu$ be any toggle-symmetric probability distribution on the subshapes of $\sigma$.  Then the expected jaggedness of a subshape of $\sigma$ with respect to $\mu$ is
\begin{equation}\label{e:main_intro}
\frac{2ab}{a+b}\left( 1 + \sum_{c\in C(\sigma)} \!\delta(c)\,\PP_\mu(c)\right).
\end{equation}
\end{thm}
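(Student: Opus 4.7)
The plan is to derive the formula by writing jaggedness as a sum of local indicators, applying toggle-symmetry to consolidate, and then telescoping along rows and columns of $\sigma$, with the corner corrections emerging as the residuals that fail to cancel. The starting point is the identity
\[
\jag(I) = \sum_{c\in \sigma}\Ind[c\text{ is maximal in }I] + \sum_{c\in \sigma}\Ind[c\text{ is minimal in }\sigma\setminus I],
\]
valid for every order ideal $I \subseteq \sigma$. Toggle-symmetry of $\mu$ makes the expectations of the two sums equal, so
\[
\EE_\mu[\jag(I)] = 2\sum_{c\in \sigma}\PP_\mu(c\text{ is maximal in }I).
\]
Each event ``$c$ is maximal in $I$'' is local, depending only on whether $c$, its east neighbor, and its south neighbor lie in $I$ (treating neighbors outside $\sigma$ as vacuously absent). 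So the problem reduces to computing a sum of local probabilities of the form $\PP_\mu(c\in I,\,c^E\notin I,\,c^S\notin I)$.

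Next, I would exploit toggle-symmetry a second time, rewriting each local probability as a combination of simpler inclusion and pair probabilities $\PP_\mu(c\in I)$ and $\PP_\mu(c\in I,\,c'\in I)$ for covering pairs $c\lessdot c'$, and then matching the ``maximal-in-$I$'' events at each $c$ against the ``minimal-not-in-$I$'' events at the east and south neighbors. This should produce a row-by-row telescoping along each row of $\sigma$, together with a dual column-by-column telescoping along each column. In the rectangle case, these telescopings close up with no leftover terms: the $a$ rows and $b$ columns each contribute equally, and summing yields precisely the harmonic mean $\tfrac{2ab}{a+b}$.

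The main obstacle is the bookkeeping at the corners $c \in C(\sigma)$ where the boundary of $\sigma$ changes direction. At each such corner, the row telescoping and the column telescoping do not close cleanly, because one of the neighbors needed to absorb a cancellation simply does not lie in $\sigma$. The leftover is a local probability $\PP_\mu(c)$, weighted by a sign $\delta(c)\in\{\pm 1\}$ according to whether the corner is concave (inward-facing, producing a forced turn of the separating lattice path) or convex (outward-facing, producing a forced non-turn). Showing that all such residuals assemble cleanly into the single correction $\sum_{c\in C(\sigma)}\delta(c)\PP_\mu(c)$, and that the connectedness hypothesis on $\sigma$ is exactly what lets the interior row and column telescopings interlock without additional defects, will be the delicate step of the argument.
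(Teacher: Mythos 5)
Your opening reduction is correct and matches the paper: writing $\jag = \sum_{p}(\T^+_p + \T^-_p)$ and invoking toggle-symmetry to get $\EE_\mu(\jag) = 2\sum_{p}\EE_\mu(\T^+_p)$ is exactly how the paper begins and ends its argument. But the core of the proof --- where the factor $\tfrac{2ab}{a+b}$ and the corrections $\delta(c)\PP_\mu(c)$ actually come from --- is missing, and one concrete sign that the proposed telescoping cannot succeed is your claim that $\delta(c)\in\{\pm 1\}$. In the theorem being proved, $\delta(c)$ is \emph{not} a sign: it is the displacement $\pm\left(1-\tfrac{i}{a}-\tfrac{j}{b}\right)$ of the corner from the main anti-diagonal (Definition \ref{def:disp}), a rational number depending on where the corner sits, not merely on whether it is a northwest or southeast corner. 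A row-by-row and column-by-column telescoping whose residuals are local probabilities with $\pm 1$ weights can only produce integer-weighted corner terms, so it cannot assemble into \eqref{e:main_intro}. Relatedly, your sketch gives no mechanism for the harmonic mean: naively adding one telescoped identity per row and per column would produce something on the order of $a+b$, not $\tfrac{2ab}{a+b}$; to get the harmonic mean you must weight the local identities so that the total weight is $ab$ while each row carries total weight $b$ and each column total weight $a$, and nothing in the proposal plays that role.

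The paper's actual mechanism is worth contrasting with your plan. For each box $[i,j]$ it forms a ``rook'' random variable $R_{ij}$, a signed sum of $\T^{\pm}$ over the four quadrants based at $[i,j]$ (not over the row and column). Toggle-symmetry kills the off-row, off-column quadrant contributions in expectation, leaving $\EE_\mu(R_{ij}) = \sum_{[i',j]\in\sigma}\EE_\mu(\T^+_{i',j}) + \sum_{[i,j']\in\sigma}\EE_\mu(\T^+_{i,j'})$; meanwhile a deterministic lattice-path identity gives $R_{ij}(\rho) = 1 + \#C_{ij}(\rho)$, hence $\EE_\mu(R_{ij}) = 1 + \sum_{c\in C_{ij}(\sigma)}\PP_\mu(c)$. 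The decisive step is then a combinatorial ``rook placement'': integer coefficients $r_{ij}$, supported on the southeast border strip, with every row summing to $b$ and every column to $a$. Taking $\sum_{[i,j]} r_{ij}R_{ij}$ yields $(a+b)\sum_{[i,j]}\EE_\mu(\T^+_{i,j})$ on one side and $ab + \sum_{c}\bigl(\sum_{[i,j]\colon c\in C_{ij}(\sigma)} r_{ij}\bigr)\PP_\mu(c)$ on the other, and a direct count shows $\sum_{[i,j]\colon c\in C_{ij}(\sigma)} r_{ij} = ab\,\delta(c)$ --- this is precisely where the fractional, position-dependent displacement enters. If you want to salvage your approach, you would need to replace the unweighted telescoping by some such weighted combination of local identities; as written, the proposal has a genuine gap at exactly the step you yourself flag as ``delicate.''
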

\noindent Here:
\begin {itemize}
\item $C(\sigma)$ is the set of {\em outward corners} of $\sigma$, and $\PP_{\mu}(c)$ is the probability (according to~$\mu$) that 
the edges of the outward corner $c$ are both included in the lattice path that ``cuts out'' the subshape 
(see Definition \ref{def:outward} for details); 
\item the \emph{displacement} $\delta(c)$ is proportional to the signed distance between the corner~$c$ and the antidiagonal of the partition (Definition \ref{def:disp}).
\end{itemize}
For now, the main point is that the expected jaggedness can be calculated as the harmonic mean of~$a$ and~$b$, plus a sum of correction terms that can be completely understood in terms of~$\mu$ and~$\sigma$.   (When $\sigma$ is a rectangle, there are no correction terms and~\eqref{e:main_intro} gives the harmonic mean exactly, for {\em any} toggle-symmetric distribution.)  

There are several key differences between Theorem~\ref{thm:main_intro} and the corresponding result~\cite[Theorem 2.8]{chan2015genera} of Chan et al.  First, our theorem applies to {\em any} toggle-symmetric distribution.  Moreover, it is fully symmetric with respect to interchanging rows and columns, which is not the case in \cite{chan2015genera}.  Indeed, our result makes explicit that the only dependence is on the outer corners and their displacements. This will allow us to immediately derive that for any {\em balanced} shape, the expected jaggedness is always the harmonic mean; see Corollary~\ref{cor:main}.  

We also note that our results, combined with theorems of Striker~\cite{striker2015toggle}, have direct applications to homomesy results under the operations of rowmotion and gyration on posets. In particular, they allow us to recover and generalize a theorem of Propp and Roby \cite{propp2013homomesy} on homomesies for antichain cardinalities.  We explain these applications in~\S\ref{ssec:antichain}. In Section~\ref{sec:open}, we give four open questions we would like to see explored.

We close this section by giving an example that illustrates Theorem~\ref{thm:main_intro}.

\begin{example}
Consider the Young diagram shape $\sigma=(3,1)$. The seven subshapes of $\sigma$, equivalently the lattice paths in $\sigma$, are depicted below. The numbers below each path indicate that subshape's jaggedness, along with the probability of that subshape's occurrence according to the linear distribution.  Then we can calculate directly that~$\EE_{\mu_\lin}(\jag)=34/15 $.

\medskip

\begin{center}
\begin{tikzpicture}
\begin{scope}[shift={(-7,0)},scale=1.0]
\node at (0,0) {\ydiagram{3,1}};
\node at (0,-1.2) {$(1;\, 1/5)$};
\draw[red,ultra thick] (-0.9,-0.6) -- (-0.9,0.6) -- (0.9,0.6);
\end{scope}

\begin{scope}[shift={(-5,0)},scale=1.0]
\node at (0,0) {\ydiagram{3,1}};
\node at (0,-1.2) {$(2;\, 1/15)$};
\draw[red,ultra thick] (-0.9,-0.6) -- (-0.9,0) -- (0.9,0)-- (0.9,0.6);
\end{scope}

\begin{scope}[shift={(-3,0)},scale=1.0]
\node at (0,0) {\ydiagram{3,1}};
\node at (0,-1.2) {$(3;\, 2/15)$};
\draw[red,ultra thick] (-0.9,-0.6) -- (-0.9,0) -- (0.3,0)-- (0.3,0.6)--(0.9,0.6);
\end{scope}

\begin{scope}[shift={(-1,0)},scale=1.0]
\node at (0,0) {\ydiagram{3,1}};
\node at (0,-1.2) {$(3;\, 1/5)$};
\draw[red,ultra thick] (-0.9,-0.6) -- (-0.9,0) -- (-0.3,0)-- (-0.3,0.6)--(0.9,0.6);
\end{scope}

\begin{scope}[shift={(1,0)},scale=1.0]
\node at (0,0) {\ydiagram{3,1}};
\node at (0,-1.2) {$(2;\, 1/15)$};
\draw[red,ultra thick] (-0.9,-0.6) -- (-0.3,-0.6) -- (-0.3,0)-- (-0.3,0.6)--(0.9,0.6);
\end{scope}

\begin{scope}[shift={(3,0)},scale=1.0]
\node at (0,0) {\ydiagram{3,1}};
\node at (0,-1.2) {$(3;\, 2/15)$};
\draw[red,ultra thick] (-0.9,-0.6) -- (-0.3,-0.6) -- (-0.3,0) -- (0.3,0)-- (0.3,0.6)--(0.9,0.6);
\end{scope}

\begin{scope}[shift={(5,0)},scale=1.0]
\node at (0,0) {\ydiagram{3,1}};
\node at (0,-1.2) {$(2;\, 1/5)$};
\draw[red,ultra thick] (-0.9,-0.6) -- (-0.3,-0.6) -- (-0.3,0) -- (0.9,0)--(0.9,0.6);
\end{scope}
\end{tikzpicture}
\end{center}

\noindent Now, let us use Theorem \ref{thm:main_intro} instead to compute $\EE_{\mu_\lin}(\jag)$, using the fact that $\mu_\lin$ is toggle-symmetric by Proposition~\ref{prop:limit} and Remark~\ref{r:shuffles}. The corner $c$ occurring at~$(1,1)$ is the only outward corner of~$\sigma$.  Its displacement $\delta(c)$ is $-1/6$, as in Definition~\ref{def:disp}.  Finally, we have $\PP_{\mu_{\lin}}(c)=1/3$, by the formula~\eqref{eq:mulin} obtained in \S\ref{sec:correction}. Plugging these values into ~\eqref{e:main_intro} yields
\[\EE(\jag)=(12/5)\left( 1-1/6 \cdot 1/3\right) = 34/15,\] 
as expected.

\end{example}

\bigskip

\noindent {\bf Acknowledgments.} This paper grew out of discussions at the 2015 conference on Dynamical Algebraic Combinatorics at the American Institute of Mathematics (AIM).  We thank the organizers of the conference, J.~Propp, T.~Roby, J.~Striker, and N.~Williams, as well as all of the staff at AIM, for providing a very stimulating environment for collaboration.  We acknowledge SageMathCloud for providing a useful platform for remote collaboration. Finally, we thank R.~Stanley for pointing out the use of the Reciprocity Theorem for order polynomials to compute the number of increasing tableaux of bounded height, as detailed in Section~\ref{sec:correction}.  M.~Chan was supported by NSF DMS-1204278. S.~Hopkins was supported by NSF grant \#1122374.

\section{Toggle-symmetric distributions} \label{sec:togglesym}

For background on posets see~\cite[\S3]{stanley2012ec1}. Fix a finite poset $(P,\le)$.  An {\em order ideal} of~$P$ is a subset~$\mcO \subseteq P$ such that for every~$p\in \mcO$ and every $q \in P$ with $q \le p$, we have~$q\in \mcO$.  We denote the set of order ideals of $P$ by~$\J(P)$. If $P = P_1 \sqcup P_2$ then the set of order ideals decomposes as~$\J(P) = \J(P_1) \times \J(P_2)$ so we will assume from now on that~$P$ is connected. We do not consider the empty poset connected. Let~$\mcO \in \J(P)$ and let~$p\in P$ be any element. We say~$p$ can be {\em toggled in} to~$\mcO$ if $p$ is a minimal element not in $\mcO$, and that $p$ can be {\em toggled out} of $\mcO$ if $p$ is a maximal element in $\mcO$. Equivalently, $p$ can be toggled in to $\mcO$ if~$p \notin \mcO$ and~$\mcO\cup\{p\}\in\J(P)$, and~$p$ can be toggled out of $\mcO$ if~$p \in \mcO$ and~$\mcO\setminus\{p\} \in \J(P)$.

\begin{definition}
The {\em jaggedness} of an order ideal $\mcO \in \J(P)$, denoted $\jag(\mcO)$, is the total number of elements $p\in P$ which can be toggled into $\mcO$ or out of $\mcO$.  
\end{definition}

In this paper we will consider $\J(P)$ as a discrete probability space and so will refer to functions on $\J(P)$ as random variables. Define, for each $p \in P$, two indicator random variables $\T^+_p,\T^-_p\col \J(P) \ra \RR$ that record whether $p$ is toggleable-in (respectively toggleable-out) of an order ideal.  Explicitly, for $\mcO\in\J(P)$, we define
\begin{align*}
\T^+_p(\mcO) &\coleq \begin{cases}
1 & \text{ if $p$ can be toggled in to $\mcO$}, \\
0 & \text{ otherwise}
\end{cases}, \\
\T^-_p(\mcO) &\coleq \begin{cases}
1 & \text{ if $p$ can be toggled out of $\mcO$}, \\
0 & \text{ otherwise}.
\end{cases}
\end{align*}
These random variables are highly related to Striker's toggleability~\cite[Definition 6.1]{striker2015toggle}. Indeed, her toggleability statistic $\T_p$ simply decomposes as $\T_p = \T_p^+ - \T_p^-$. Note furthermore that $\jag = \sum_{p \in P} (\T_p^+ + \T_p^-)$. In this paper, we will show how certain conditions on $\T_p$ imply conditions on $\jag$, as in the following main definition of the section.

\begin{definition}
Let $\mu$ be a probability distribution on $\J(P)$.  Given an element $p\in P$, we say that $\mu$ is {\em toggle-symmetric at $p$} if
\[\PP_\mu(\,p \text{ can be toggled in to }\mcO) = \PP_\mu(\,p \text{ can be toggled out of }\mcO).\]
Equivalently, $\mu$ is toggle-symmetric at $p$ if $$\EE_{\mu}(\T_p) = \EE_{\mu}(T^{+}_p) - \EE_\mu(T^{-}_p) =  0.$$ We say that~$\mu$ is {\em toggle-symmetric} if it is toggle-symmetric at every $p \in P$.  
\end{definition}

We would like to introduce toggle-symmetric probability distributions as an interesting class of distributions on order ideals of posets.  We now give plenty of good examples of toggle-symmetric distributions.  Throughout, we fix a poset $P$ with $n \coleq \#P$.  

\subsection{Toggle-symmetric distributions arising from \texorpdfstring{$P$}{P}-partitions} \label{sec:pparts}

In this subsection we define several families of toggle-symmetric distributions that arise from $P$-partitions and related objects like linear extensions. For background on $P$-partitions see~\cite[\S3.15]{stanley2012ec1} or the recent historical survey~\cite{gessel2015historical}.

\begin{definition}
A \emph{linear extension} of $P$ is a bijection $\ell\col P \to \{1,2,\ldots,n\}$ such that~$p \le q$ for $p,q \in P$ implies $\ell(p) \le \ell(q)$. The {\em linear distribution} $\mu_\lin$ on $\J(P)$ is defined as follows: for $\mcO \in \J(P)$ we define $\mu_{\lin}(\mcO)$ to be the probability that, choosing a linear extension $\ell$ of $P$ and $k \in \{0,1,2,\ldots,n\}$ uniformly at random, the order ideal~$\ell^{-1}(\{1,\ldots,k\})$ is equal to~$\mcO$.
\end{definition}

\begin{definition}
A \emph{weak reverse $P$-partition of height $m$} is a map $\ell\col P \to \{0,1,\ldots,m\}$ such that~$p \le q$ for $p,q \in P$ implies $\ell(p) \le \ell(q)$. Fix $m\ge 1.$  The {\em weak  distribution}~$\mu_{m,\le}$ on $\J(P)$ is defined as follows: for $\mcO \in \J(P)$ we define $\mu_{m,\le}(\mcO)$ to be the probability that, choosing a weak reverse $P$-partition $\ell$ of height $m$ and $k \in \{1,2,\ldots,m\}$ uniformly at random, the order ideal~$\ell^{-1}(\{0,\ldots,k-1\})$ is equal to~$\mcO$.
\end{definition}

\begin{remark} \label{rem:uniform} 
There is a bijection between $\J(P)$ and the set of weak reverse $P$-partitions of height $1$ given by sending an order ideal $\mcO$ to  $\Ind -\Ind_{\mcO}$ where $\Ind_X$ is the indicator function of a subset $X \subseteq P$. Thus $\mu_{1,\le}$ is simply the {\em uniform} distribution on~$\J(P)$, which we will denote $\mu_\unif$.
\end{remark}

\begin{definition}
A \emph{strict reverse $P$-partition of height $m$} is a map $\ell\col P \to \{0,1,\ldots,m\}$ such that~$p < q$ for $p \neq q \in P$ implies $\ell(p) < \ell(q)$. The \emph{rank} of $P$, denoted~$\rk(P)$, is the maximum length of a chain of~$P$. Given~$m\ge \rk(P)$, the {\em strict distribution} $\mu_{m,<}$ on~$\J(P)$ is defined as follows: for~$\mcO \in \J(P)$ we define $\mu_{m,<}(\mcO)$ to be the probability that, choosing a strict reverse $P$-partition $\ell$ of height $m$ and~$k \in \{0,1,\ldots,m+1\}$ uniformly at random, the order ideal~$\ell^{-1}(\{0,\ldots,k-1\})$ is equal to~$\mcO$.
\end{definition}

\begin{remark} \label{rem:ranked}
We say that $P$ is \emph{ranked} if there exists a \emph{rank function} $\rk\col P \to \ZZ_{\ge 0}$ such that $\rk(q) = \rk(p) + 1$ whenever $q$ covers $p$ (denoted $p \lessdot q$) in $P$. We will always assume that~$0$ is in the image of the rank function in which case~$\rk$ is uniquely determined if it exists. We say that~$P$ is \emph{graded} if it is ranked and moreover~$\rk(p) = 0$ for all minimal elements~$p$ of~$P$ and~$\rk(q) = \rk(P)$ for all maximal elements $q$ of $P$. Equivalently,~$P$ is graded if all maximal chains of $P$ have the same length. When $P$ is graded, the distribution $\mu_{\rk(P),<}$ is easy to describe: it is uniform on the set of all order ideals~$\rk^{-1}(\{0,\ldots,k-1\})$ for~$k \in \{0,\ldots,\rk(P)+1\}$. In this case we call $\mu_{\rk(P),<}$ the \emph{rank distribution} and denote it by~$\mu_{\rk}$.
\end{remark}

Now we will show that all of the above distributions $\mu_\lin, \mu_{m,\le},$ and $\mu_{m,<}$ are toggle-symmetric.  We start by proving toggle-symmetry for $\mu_{m,\le}$ and $\mu_{m,<}$.

\begin{lemma} \label{lem:ppartsym}
For any poset $P$ and any $m\ge 1$, the distribution $\mu_{m,\le}$ on~$\J(P)$ is toggle-symmetric. Similarly, for any $m\ge \rk(P)$, the distribution $\mu_{m,<}$ on~$\J(P)$ is toggle-symmetric. In particular, the uniform distribution $\mu_{\unif}$ is toggle-symmetric and, if~$P$ is graded, the rank distribution $\mu_{\rk}$ is toggle-symmetric.
\end{lemma}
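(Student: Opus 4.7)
The plan is to prove toggle-symmetry at a fixed $p \in P$ by a local counting argument that fixes all values of $\ell$ outside $p$ and varies the pair $(\ell(p), k)$.

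Concretely, given a partial weak reverse $P$-partition on $P \setminus \{p\}$, let $u' = \max\{\ell(q) : q <_P p\}$ (with convention $u' = 0$ if $p$ is minimal) and $v = \min\{\ell(q) : q >_P p\}$ (with $v = m$ if $p$ is maximal); these are chosen so that the values of $\ell(p)$ extending the partial partition to a weak reverse $P$-partition on $P$ are exactly $[u', v]$.

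I will then unpack the toggle-in and toggle-out conditions at $p$ purely in terms of $(\ell(p), k)$: toggle-in becomes $u' + 1 \le k \le \ell(p) \le v$, and toggle-out becomes $u' \le \ell(p) < k \le v$. These cut out a closed and an open triangle in the $(\ell(p), k)$-plane with the same number of lattice points (e.g.\ via the bijection $(\ell(p), k) \mapsto (k - 1, \ell(p))$). Summing this local equality over all partial partitions on $P \setminus \{p\}$ and using that $\mu_{m, \le}$ assigns equal weight to every pair $(\ell, k)$ gives $\EE_{\mu_{m, \le}}(\T_p^+) = \EE_{\mu_{m, \le}}(\T_p^-)$, which is toggle-symmetry at $p$.

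For the strict distribution $\mu_{m, <}$, an almost identical argument applies after adjusting the boundary conventions to $u' = -1$ and $v = m + 1$, and observing that $k$ now ranges over $\{0, \ldots, m+1\}$ and $\ell(p)$ over $[u'+1, v-1]$; the two regions are again triangles with equal lattice-point counts. The uniform and rank statements then follow from Remarks~\ref{rem:uniform} and~\ref{rem:ranked}, which identify $\mu_{\unif}$ with $\mu_{1, \le}$ and $\mu_{\rk}$ with $\mu_{\rk(P), <}$. The only real subtlety is setting the boundary conventions so that the local equality holds uniformly for minimal, maximal, and interior $p$; once that is in place, the rest is bookkeeping.
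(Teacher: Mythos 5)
Your argument is correct and is essentially the paper's proof in different packaging: the paper's involution $\tau_p$, which replaces $\ell(p)$ by $\max\{\ell(r)\colon r\lessdot p\}+\min\{\ell(r)\colon p\lessdot r\}-\ell(p)$, is exactly the reflection of the interval $[u',v]$ that underlies your equal-triangle count, and both proofs fix the values off $p$ and compare the number of $(\ell(p),k)$ witnesses for toggle-in versus toggle-out. One trivial remark: in the strict case your sample bijection $(\ell(p),k)\mapsto(k-1,\ell(p))$ can land at $\ell(p)=u'$, which is no longer an allowed value, so one should use $(\ell(p),k)\mapsto(k,\ell(p)+1)$ instead; the equal-lattice-point-count claim you actually invoke is still correct.
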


\begin{proof}
Let us start by proving the lemma with the weak distribution $\mu_{m,\le}$.  For a given~$p\in P$, we will define an involution~$\tau_p$ on the set of weak reverse $P$-partitions of height $m$, and this involution will verify that $\mu$ is toggle-symmetric at $p$. Let $\widehat{P}$ denote the poset obtained from $P$ by adjoining a minimal element $\widehat{0}$ and a maximal element $\widehat{1}$. Let $\ell$ be a weak reverse $P$-partition of height $m$; we extend $\ell$ to $\widehat{P}$ by setting $\ell(\widehat{0}) \coleq 0$ and~$\ell(\widehat{1}) \coleq m$. Then for $p,q \in P$ we define
\begin{equation} \label{eq:ppartiontoggle}
\tau_p(\ell) (q) \coleq \begin{cases} \ell(q) & \textrm{$q \neq p$}, \\ \mathrm{max}\{\ell(r)\col r \lessdot p,r \in \widehat{P}\} + \mathrm{min}\{\ell(r)\col p \lessdot r, r \in \widehat{P}\} - \ell(p) & \textrm{$q=p$}. \end{cases}
\end{equation}
Evidently $\tau_p$ is an involution and preserves the relevant weak inequalities. For $\ell$ a weak reverse $P$-partition of height $m$ and $k \in \{1,2,\ldots,m \}$ we have
\[ \T^{+}_p (\ell^{-1}(\{0,\ldots,k-1\})) = \begin{cases} 1 &\textrm{if $\mathrm{max}\{\ell(r)\col r \lessdot p,r \in \widehat{P}\} < k \leq \ell(p)$},\\ 0 &\textrm{otherwise}. \end{cases}\]
Thus
\[\EE_{\mu_{m,\leq}}(\T^{+}_p) = \EE(\ell(p) -\mathrm{max}\{\ell(r)\col r \lessdot p,r \in \widehat{P}\})\] 
for $\ell$ a uniformly random weak reverse $P$-partition of height $m$. Similarly, 
\[\EE_{\mu_{m,\leq}}(\T^{-}_p) = \EE(\mathrm{min}\{\ell(r)\col p \lessdot r, r \in \widehat{P}\} - \ell(p)).\] 
But then observe that
\begin{align*}
\EE_{\mu_{m,\leq}}(\T^{+}_p) &= \EE(\ell(p) -\mathrm{max}\{\ell(r)\col r \lessdot p,r \in \widehat{P}\}) \\
&= \EE(\mathrm{min}\{\ell(r)\col p \lessdot r, r \in \widehat{P}\} - \tau_p(\ell)(p)) \\
&= \EE_{\mu_{m,\leq}}(\T^{-}_p)
\end{align*}
and thus indeed $\mu_{m,\leq}$ is toggle symmetric.

The proof of the lemma for the strong distribution is exactly analogous to the weak distribution.  Let~$\ell$ be a strict reverse $P$-partitions of height $m$; we extend $\ell$ to $\widehat{P}$ by setting $\ell(\widehat{0}) \coleq -1$ and~$\ell(\widehat{1}) \coleq m+1$. For $p \in P$ we define an involution $\tau_p$ on the set of strict reverse~$P$-partitions by the exact same formula~(\ref{eq:ppartiontoggle}) as above. This involution again establishes that $\EE_{\mu_{m,<}}(\T_p) = 0$. The last sentence follows from Remarks~\ref{rem:uniform} and~\ref{rem:ranked}.
\end{proof}

\begin{prop} \label{prop:limit} We have
\[\lim_{m\to \infty} \mu_{m,\le} = \lim_{m\to \infty} \mu_{m,<} = \mu_\lin.\]
\end{prop}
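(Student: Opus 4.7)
The plan is to show, for each fixed order ideal $\mcO \in \J(P)$, that both $\mu_{m,\le}(\mcO)$ and $\mu_{m,<}(\mcO)$ converge to $\mu_{\lin}(\mcO)$ as $m \to \infty$; since $\J(P)$ is finite this pointwise convergence suffices. Write $n = \#P$ and $j = \#\mcO$.

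The key observation is that any reverse $P$-partition $\ell\col P \to \{0,\ldots,m\}$ (whether weak or strict) having \emph{all distinct values} corresponds bijectively to a pair $(\ell', S)$ where $\ell'$ is a linear extension of $P$ and $S = \{s_1 < \cdots < s_n\} \subseteq \{0,\ldots,m\}$, via $\ell(p) = s_{\ell'(p)}$. Hence in both settings the number of distinct-value $P$-partitions of height $m$ equals $e(P)\binom{m+1}{n}$, where $e(P)$ denotes the number of linear extensions of $P$. On the other hand, by the theory of order polynomials \cite[\S3.15]{stanley2012ec1}, the total number of weak (resp.\ strict) reverse $P$-partitions of height $m$ is a polynomial in $m$ of degree $n$ with leading coefficient $e(P)/n!$. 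Therefore $P$-partitions having at least one repeated value comprise only an $O(1/m)$ fraction of the total, and their contribution to $\mu_{m,\le}(\mcO)$ and $\mu_{m,<}(\mcO)$ is negligible in the limit.

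Next I would analyze the distinct-value contribution. For $\ell$ corresponding to $(\ell', S)$, one checks that $\ell^{-1}(\{0,\ldots,k-1\}) = \mcO$ iff $\ell'$ is consistent with $\mcO$ (i.e.\ $\ell'^{-1}(\{1,\ldots,j\}) = \mcO$) and $k$ lies in the interval $(s_j, s_{j+1}]$, with appropriate boundary conventions ($s_0 = -1$, $s_{n+1} = m+1$ in the strict case; $s_0 = 0$, $s_{n+1} = m$ in the weak case). Writing $N(\mcO) = e(\mcO)\cdot e(P\setminus\mcO)$ for the number of consistent linear extensions---so that $\mu_{\lin}(\mcO) = N(\mcO)/[e(P)(n+1)]$---the favorable distinct-value count is $N(\mcO)\sum_S g_j$, where $g_j = s_{j+1} - s_j$. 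In the strict case the $n+1$ gaps form a symmetric composition of $m+2$ into positive parts, giving $\sum_S g_j = \binom{m+1}{n}(m+2)/(n+1)$ exactly; in the weak case the composition is slightly asymmetric at the endpoints, but $\sum_S g_j$ still equals $\binom{m+1}{n}\cdot m/(n+1)$ to leading order. Dividing by the total $(\ell,k)$ pair count (asymptotic to $e(P)\, m^{n+1}/n!$) yields the limit $N(\mcO)/[e(P)(n+1)] = \mu_{\lin}(\mcO)$, as required.

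The main obstacle is bookkeeping the weak-case edge effects: the endpoint gaps ($j=0$ and $j=n$) are only required to be non-negative rather than positive, breaking the exact symmetry that simplifies the strict case. However the resulting discrepancy is only an $O(1/m)$ correction to the gap average, and thus washes out in the limit.
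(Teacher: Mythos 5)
Your argument is correct and follows essentially the same route as the paper: both proofs reduce to injective order-preserving maps (after noting that non-injective ones form a vanishing fraction as $m\to\infty$), decompose such a map as a linear extension paired with a value set, and then show that the sum over value sets of the gap containing $k$ is (asymptotically) independent of $j=\#\mcO$. The only cosmetic difference is that the paper packages the injective case as an auxiliary distribution $\mu_{m,\hookrightarrow}$ proved equal to $\mu_{\lin}$ \emph{exactly} for all $m\ge n-1$ via a ``signature'' symmetry of the gap multiset, whereas you compute the gap sums asymptotically and absorb the weak-case endpoint effects into an $O(1/m)$ error.
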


\begin{proof}

To prove this proposition we will define an intermediary distribution $\mu_{m,\hookrightarrow}$ on~$\J(P)$ based on injective order-preserving maps. It will turn out that~$\mu_{m,\hookrightarrow} = \mu_{\lin}$. For~$m \geq n-1$ and~$I \in \J(P)$ we define $\mu_{m,\hookrightarrow}(I)$ to be the probability that, choosing an \emph{injective} order-preserving map~$\ell\colon P \to \{0,1,\ldots,m\}$ and $k \in \{0,1,\ldots,m+1\}$ uniformly at random, the order ideal $\ell^{-1}(\{0,\ldots,k-1\})$ is equal to $I$. First we claim
\[\lim_{m\to \infty} \mu_{m,\le} = \lim_{m\to \infty} \mu_{m,<} = \lim_{m\to \infty} \mu_{m,\hookrightarrow}.\]
This is clear because as $m \to \infty$ the fraction of (weak or strict) order-preserving maps~$P \to \{0,1,\ldots,m\}$ that are injective approaches $1$.  Furthermore, in the case of~$\mu_{m,\le},$ as $m \to \infty$ the probability that a uniformly chosen~$k \in \{0,\ldots,m+1\}$ actually lands in $\{1,\ldots,m\}$ approaches~$1$.

Next we claim that $\mu_{m,\hookrightarrow} = \mu_\lin$ for all $m \geq n-1$. 
Given an order ideal $I$, let~$L(I)$ denote the set of linear extensions $\ell\colon P\stackrel{\cong}{\longrightarrow} \{1,\ldots,n\}$ that are compatible with~$I$; that is, if $p\in I$ and $q\not\in I$ then $\ell(p) < \ell(q)$.  Let $\Phi_{n,m}$ denote the set of order-preserving injective maps
$\phi\colon \{0,\ldots,n\!+\!1\} \rightarrow \{-1,\ldots,m\!+\!1\}$
such that $\phi(0) = -1$ and~$\phi(n+1) = m+1.$  Then

$$\PP_{\mu_{m,\hookrightarrow}}(I) \,\, {\boldsymbol \sim} \, \sum_{\ell\in L(I)} \sum_{\phi \in \Phi_{n,m}}\! \left(\phi(\#I\!+\!1)-\phi(\#I)\right),$$
where the sign $\sim$ denotes proportionality up to a constant. The reason is that 
the order-preserving injective map $\phi|_{\{1,\ldots,n\}}\circ \ell$ gives rise to $I$ if and only if $I$ is compatible with $\ell$, and furthermore, if $I$ is compatible with the linear extension $\ell$, then $I$ arises from the map $\phi|_{\{1,\ldots,n\}}\circ \ell$ with probability proportional to $\phi(\#I\!+\!1)-\phi(\#I)$.

But now we claim the inner sum $\sum_{\phi \in \Phi_{n,m}}\! \left(\phi(\#I\!+\!1)-\phi(\#I)\right)$ is a constant, not depending on $\#I$.  Indeed, given $\phi\in\Phi_{n,m}$, call the {\em signature} of $\phi$ the multiset of consecutive differences
$$\{\phi(1)\!-\!\phi(0), \phi(2)\!-\!\phi(1),\ldots,\phi(n\!+\!1)\!-\!\phi(n)\}.$$
Then the sum $\sum \left(\phi(\#I\!+\!1)-\phi(\#I)\right)$ restricted to any given signature-equivalence class in $\Phi_{n,m}$ is a constant, not depending on $\#I$.  Therefore, the same is true for the sum over all $\phi\in\Phi_{n,m}$.  This shows that $\mu_{m,\hookrightarrow}(I)$ is proportional to $\#L(I)$ and so~$\mu_{m,\hookrightarrow} = \mu_{\lin}$.
\end{proof}

From Lemma~\ref{lem:ppartsym} and Proposition~\ref{prop:limit} we conclude:
\begin{cor}\label{cor:linsym}
The  linear distribution $\mu_\lin$ on $\J(P)$ is toggle-symmetric.
\end{cor}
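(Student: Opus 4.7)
The plan is to derive toggle-symmetry of $\mu_{\lin}$ by a direct limiting argument, combining the toggle-symmetry of $\mu_{m,\le}$ established in Lemma~\ref{lem:ppartsym} with the convergence $\mu_{m,\le} \to \mu_{\lin}$ established in Proposition~\ref{prop:limit}.

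The essential structural observation is that, for each fixed $p \in P$, toggle-symmetry at $p$ is a \emph{linear} (hence continuous) condition on the probability vector $\mu$ in the finite-dimensional simplex of distributions on $\J(P)$: it is exactly the single equation
\[\EE_\mu(\T_p^{+}) - \EE_\mu(\T_p^{-}) \;=\; \sum_{\mcO\in\J(P)} \mu(\mcO)\bigl(\T_p^{+}(\mcO) - \T_p^{-}(\mcO)\bigr) \;=\; 0.\]
Because $\J(P)$ is finite, this linear functional is automatically continuous in $\mu$ with respect to any reasonable notion of convergence of distributions.

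First I would fix $p \in P$. By Lemma~\ref{lem:ppartsym}, for every $m \ge 1$ we have $\EE_{\mu_{m,\le}}(\T_p^{+}) = \EE_{\mu_{m,\le}}(\T_p^{-})$. By Proposition~\ref{prop:limit}, $\mu_{m,\le}(\mcO) \to \mu_{\lin}(\mcO)$ as $m \to \infty$ for every $\mcO \in \J(P)$. Passing to the limit term-by-term in the finite sum above yields
\[\EE_{\mu_{\lin}}(\T_p^{+}) - \EE_{\mu_{\lin}}(\T_p^{-}) \;=\; \lim_{m\to\infty}\bigl(\EE_{\mu_{m,\le}}(\T_p^{+}) - \EE_{\mu_{m,\le}}(\T_p^{-})\bigr) \;=\; 0.\]
Since $p\in P$ was arbitrary, $\mu_{\lin}$ is toggle-symmetric.

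Given that both ingredients are already in hand, I do not foresee any real obstacle: the only thing being used beyond the stated results is the trivial interchange of a limit with a finite sum. As a sanity check one could alternatively try to prove toggle-symmetry of $\mu_{\lin}$ directly, by constructing, for each $p$, an explicit involution on the underlying sample space (pairs consisting of a linear extension $\ell$ and a cutoff $k$) that exchanges the event ``$p$ is toggleable in'' with the event ``$p$ is toggleable out,'' in the spirit of the involution $\tau_p$ used in the proof of Lemma~\ref{lem:ppartsym}; but the limiting approach is strictly cleaner and avoids reproving what is effectively the same combinatorial identity.
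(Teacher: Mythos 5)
Your proof is correct and is essentially identical to the paper's: the paper derives Corollary~\ref{cor:linsym} directly from Lemma~\ref{lem:ppartsym} and Proposition~\ref{prop:limit}, with the continuity of the linear condition $\EE_\mu(\T_p)=0$ under pointwise convergence of distributions left implicit, which you have simply made explicit. Your closing remark about an involution argument directly on linear extensions is also anticipated by the paper, in Remark~\ref{r:shuffles}.
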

\noindent So for any poset $P$ we have the following ``spectrum'' of toggle-symmetric distributions:
\[\xymatrixcolsep{5pc}\xymatrix{
\mu_{\unif} \ar[r]^{\mu_{m,\leq}} & \mu_{\lin} & \ar[l]_{\mu_{m,<}} \mu_{\rk}
}\]
where the rightmost distribution $\mu_{\rk}$ applies only to graded $P$.

\begin{remark}\label{r:shuffles}
Actually, we can give a much more direct and satisfying proof of Corollary~\ref{cor:linsym} which goes by way of defining some interesting involutions $\sigma_p$ for $p\in P$ on the set of linear extensions of $P$.  Note that Corollary~\ref{cor:linsym} was proved for skew shapes in~\cite[Lemma 2.9]{chan2015genera}, using involutions on pairs (linear extension, order ideal).  Instead, the involutions $\sigma_p$ that we use here can be regarded as ``shuffle" operations on the set of linear extensions only, in the spirit of \cite{ayyer2014spectral}.  
They are defined as follows. Let~$\ell$ be a linear extension of~$P$. Extend $\ell$ to $\widehat{P}$ by setting $\ell(\widehat{0}) \coleq 0$ and $\ell(\widehat{1}) \coleq n+1$. Let~$p \in P$ and~$x \coleq \ell(p)$ and~$x' \coleq \mathrm{max}\{\ell(r)\col r \lessdot p,r \in \widehat{P}\} + \mathrm{min}\{\ell(r)\col p \lessdot r, r \in \widehat{P}\} - x$. Then for $q \in P$, define
\[ \sigma_p(\ell)(q) \coleq \begin{cases} x' &\textrm{if $\ell(q) = x$}, \\ \ell(q) - 1 &\textrm{if $x < \ell(q)\leq x'$}, \\ \ell(q) + 1 &\textrm{if $x'\leq \ell(q)<x$}, \\ \ell(q) &\textrm{otherwise}. \end{cases}\]
It is straightforward to verify that $\sigma_p$ is indeed an involution on the set of linear extensions of $P$, and that, exactly analogously to the proof of Lemma~\ref{lem:ppartsym}, this involution verifies that $\EE_{\mu_{\lin}}(\T_p) = 0$.  
\end{remark}

\subsection{Toggle-symmetric distributions arising from the toggle group} \label{sec:togglegpsym}

The toggle group was introduced by Cameron and Fon-der-Flaass~\cite{cameron1995orbits} in order to study a certain combinatorial map on order ideals that is now called \emph{rowmotion}. For background on the toggle group, rowmotion, and gyration, see~\cite{striker2012promotion}. For $p \in P$ we define the \emph{toggle at~$p$}, denoted~$\tau_p\colon \J(P) \to \J(P)$, by
\[\tau_p(\mcO) \coleq \begin{cases} \mcO \Delta \{p\}&\textrm{if $\mcO \Delta \{p\} \in \J(P)$}, \\
\mcO &\textrm{otherwise}\end{cases}\]
where $\Delta$ denotes the symmetric difference. These are the same $\tau_p$ as defined in the proof of Lemma~\ref{lem:ppartsym} for the weak distribution when $m=1$ via the bijection mentioned in Remark~\ref{rem:uniform}. The \emph{toggle group} is the subgroup of the permutation group $\mathfrak{S}_{\J(P)}$ generated by all toggles $\tau_p$ for $p \in P$. Recently Striker~\cite{striker2015toggle} proved that certain distributions on $\J(P)$ arising from toggle group elements are toggle-symmetric.

\begin{definition}
\emph{Rowmotion} is the element of the toggle group 
\[\tau_{\ell^{-1}(1)} \circ \tau_{\ell^{-1}(2)} \cdots \circ \tau_{\ell^{-1}(n)} \]
where $\ell$ is any linear extension of $P$. Note that because $\tau_p$ and $\tau_q$ commute unless $p \lessdot q$ or $q \lessdot p$ this composition indeed gives a well-defined map.
\end{definition}

\begin{definition}
Assume $P$ is ranked. \emph{Gyration} is the element of the toggle group
\[\tau_{o_1} \circ \tau_{o_2} \circ \cdots \circ \tau_{o_{n_1}} \circ \tau_{e_1} \circ \tau_{e_2} \circ \cdots \circ \tau_{e_{n_0}} \]
with $\{e_1,\ldots,e_{n_0}\} = \{p \in P\colon \rk(p)\textrm{ is even}\}$ and $\{o_1,\ldots,o_{n_1}\} = \{p \in P\colon \rk(p)\textrm{ is odd}\}$. Again, because most toggles commute, this composition gives a well-defined map.
\end{definition}

\begin{thm}[Striker~\cite{striker2015toggle}] \label{thm:striker}
Let $P$ be a poset and let $\varphi \colon \J(P)\to\J(P)$ be rowmotion or, in the case where $P$ is ranked, gyration. Then the distribution $\mu$ that is supported uniformly on a fixed $\varphi$-orbit $\Orb$ is toggle-symmetric.
\end{thm}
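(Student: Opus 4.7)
The goal is to prove $\sum_{\mcO\in\Orb}(\T_p^+(\mcO) - \T_p^-(\mcO))=0$ for every $p\in P$, which is the definition of toggle-symmetry. The driving observation is that, during the sequence of toggles computing $\varphi(\mcO)$, toggles of elements other than $p$ leave $p$'s membership unchanged. Hence, letting $\mcO^{(p)}$ denote the intermediate order ideal just before $\tau_p$ is applied, an ``up-transition'' (i.e., $p\notin\mcO$ but $p\in\varphi(\mcO)$) occurs precisely when $\T_p^+(\mcO^{(p)})=1$, and likewise a ``down-transition'' occurs precisely when $\T_p^-(\mcO^{(p)})=1$. Since $\varphi$ permutes the orbit $\Orb$, the number of up-transitions in $\Orb$ equals the number of down-transitions.

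The remaining work is to match $\T_p^\pm(\mcO^{(p)})$ with $\T_p^\pm(\mcO)$, which reduces to verifying that certain Hasse-diagram neighbors of $p$ have not yet been toggled at the moment $\tau_p$ fires. For rowmotion, the composition $\varphi = \tau_{\ell^{-1}(1)}\circ\cdots\circ\tau_{\ell^{-1}(n)}$ processes elements from the top downward, so every $q\lessdot p$ is untouched when $\tau_p$ acts. Consequently $\mcO^{(p)}$ and $\mcO$ agree on all such $q$, yielding $\T_p^+(\mcO^{(p)})=\T_p^+(\mcO)$, and hence $\sum_{\mcO\in\Orb}\T_p^+(\mcO)$ equals the number of up-transitions in the $\varphi$-orbit. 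Applying the identical analysis to $\varphi^{-1}$ (rowmotion processed bottom upward), elements covering $p$ are the ones untouched when $\tau_p$ fires, so $\T_p^-(\mcO^{(p)})=\T_p^-(\mcO)$ and $\sum_{\mcO\in\Orb}\T_p^-(\mcO)$ equals the number of down-transitions in the $\varphi^{-1}$-orbit; the latter coincides with the number of up-transitions in the $\varphi$-orbit by reversing the traversal. Combined with cyclicity, this gives $\sum \T_p^+ = \sum \T_p^-$.

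For gyration the argument is parallel but the relevant dichotomy is rank parity rather than top/bottom order. If $p$ has even rank, then every neighbor of $p$ in the Hasse diagram has odd rank, and hence has not yet been toggled when $\tau_p$ fires in the composition $\tau_{o_1}\cdots\tau_{o_{n_1}}\circ\tau_{e_1}\cdots\tau_{e_{n_0}}$; thus $\T_p^\pm(\mcO^{(p)})=\T_p^\pm(\mcO)$ directly, and both sides identify with the up and down transition counts in the $\varphi$-orbit, which agree by cyclicity. For odd-rank $p$ one runs the same analysis on $\varphi^{-1}$, which merely swaps the two batches.

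The main obstacle is the careful bookkeeping underlying the previous two paragraphs: in each case one must verify that the Hasse-diagram neighbors of $p$ relevant to the definition of $\T_p^+$ (respectively $\T_p^-$) really are the ones left untouched at the moment $\tau_p$ appears in the composition, so that the toggleability statistic read off from the intermediate state $\mcO^{(p)}$ genuinely coincides with the one read off from $\mcO$.
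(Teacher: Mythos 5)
Your argument is correct and complete. Note that the paper itself does not prove this statement: it is quoted from Striker, with the observation that her results ($\T_p$ is $0$-mesic under rowmotion, and under gyration for ranked $P$) are a rephrasing of the theorem. Your proof is essentially a reconstruction of Striker's argument, and the bookkeeping you flag as the main obstacle is handled correctly: since $\tau_q$ only changes the membership of $q$, and since $\T_p^{\pm}$ of an order ideal depends only on the membership of $p$ and of its Hasse neighbors, the identity $\T_p^{\pm}(\mcO^{(p)})=\T_p^{\pm}(\mcO)$ does hold exactly when the relevant neighbors fire after $\tau_p$ in the composition. In particular you correctly recognize that for rowmotion (applied top-to-bottom, i.e., $\tau_{\ell^{-1}(n)}$ first) only $\T_p^{+}$ can be read off from $\mcO$ directly, and that $\T_p^{-}$ must instead be matched with down-transitions of $\varphi^{-1}$; the identification of down-transitions of $\varphi^{-1}$ with up-transitions of $\varphi$ via the bijection $\mcO\mapsto\varphi^{-1}(\mcO)$ of the orbit then closes the argument. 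The gyration case is likewise correct, using that in a ranked poset Hasse neighbors have opposite rank parity and that toggles within a parity class commute.
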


Actually, Striker phrased her result in the language of homomesy. Homomesy is a certain phenomenon in dynamical algebraic combinatorics, recently introduced by Propp and Roby~\cite{propp2013homomesy}, concerning statistical averages along orbits of combinatorial maps.

\begin{definition}
Let $\mathcal{S}$ be a set of combinatorial objects and $\varphi\colon \mathcal{S} \to \mathcal{S}$ an invertible map. We say that the statistic $f:\mathcal{S} \to \RR$ is~$c$-\emph{mesic} with respect to the action of $\varphi$ on~$\mathcal{S}$ if there is $c \in \RR$ such that~$\frac{1}{\#\Orb} \sum_{s \in \Orb} f(s) = c$ for each $\varphi$-orbit $\Orb$. In other words, we say $f$ is \emph{homomesic} with respect to $\varphi$ if the average of $f$ is the same for each $\varphi$-orbit.
\end{definition}

What Striker proved was that, for any poset $P$ and any $p \in P$, the signed toggleability statistic $\T_p$ is $0$-mesic with respect to rowmotion (\cite[Lemma 6.2]{striker2015toggle}) and is also $0$-mesic with respect to gyration when $P$ is ranked (\cite[Theorem 6.7]{striker2015toggle}). Clearly these results are equivalent to Theorem~\ref{thm:striker} as stated above.

\section{The expected jaggedness in skew shapes}\label{sec:exp}
In this section, we prove a general result giving a formula for the expected jaggedness of an order ideal in a poset $P$ for any toggle-symmetric distribution whenever~$P$ is the poset corresponding to a skew Young diagram. 

A partition $\lambda = (\lambda_1\ge\ldots\ge\lambda_k)$ is a sequence of weakly decreasing positive integers.  Recall that associated to~$\lambda$ is a {\em Young diagram} consisting of $\lambda_i$ boxes in the~$i^{th}$ row, left-justified.  Given two partitions $\lambda$ and $\nu$ of the numbers $\ell$ and $n$ respectively, we say that $\nu \subseteq \lambda$ if $\nu_i \le \lambda_i$ for all $i$.  We use the usual convention that $\lambda_i = 0$ if $i$ is greater than the number of parts 
of $\lambda$. We use English notation when drawing partitions, so for instance the Young diagram corresponding to the partition $\lambda = (4,3)$ is
\[\ydiagram{4,3}\]

\begin{definition} Let $\nu \subseteq \lambda$ be two partitions.  The diagram obtained by subtracting the Young diagram of $\nu$ from the Young diagram of $\lambda$ is called a {\em skew Young diagram} or {\em skew shape}.  We will write $\sigma = \lambda/\nu$ for this shape.
\end{definition}

Let $\sigma = \lambda/\nu$ be a skew shape.   Throughout, we let $a$ denote the {\em height} of $\sigma$, i.e.,~the number of rows in $\sigma$, and let $b$ denote the {\em width} of $\sigma$, i.e.,~the number of columns.  In order to refer to the boxes of $\sigma$ and their corners, we will fix coordinates as follows.  Place $\sigma$ in an $a\times b$ rectangle.  Our convention will be that the northwest corner of the rectangle is $(0,0)$ and the southeast corner is~$(a,b)$.  The corners of the boxes of~$\sigma$ are then various lattice points in this rectangle. Furthermore, we will extend this coordinate system to the boxes of $\sigma$ by writing $[i,j]$ for the box whose {\em southeast} corner is $(i,j)$.  For example, the upper-leftmost box of a Young diagram is the box $[1,1]$.

Associated to any skew shape $\sigma$ is a poset $P_{\sigma}$ whose elements are the boxes of $\sigma$ and with $[i,j] \leq [k,l]$ if and only if $i \leq k$ and $j \leq l$. Note that $P_{\sigma}$ is always ranked (where the rank function $\rk([i,j]) = i + j - \kappa$, for an appropriate constant $\kappa$, records the diagonal) but is not always graded. All of the general poset-theoretic constructions from Section~\ref{sec:pparts} have more common names when specialized to skew shapes, which we record in the following dictionary:
\begin{center}
\begin{tabular}{c | c}
Poset $P_{\sigma}$ & Skew shape $\sigma$ \\
\hline
Order ideals & Subshapes \\
\hline
Linear extensions & Standard Young tableaux \\
\hline
Weak reverse $P_{\sigma}$-partitions & Reverse plane partitions \\
\hline
Strict reverse $P_{\sigma}$-partitions & Increasing tableaux
\end{tabular}
\end{center}
We will not go through all of these terms in detail, but let us comment for a moment on subshapes. If $\sigma = \lambda/\nu$ is a skew shape, then a \emph{subshape of $\sigma$} is a skew shape $\rho / \nu$ where~$\rho$ is a partition satisfying~$\nu \subseteq \rho \subseteq \lambda$. These are clearly the same as order ideals of $P_{\sigma}$ and so we use the notation $\J(\sigma)$ for the set of subshapes of $\sigma$. We also often identify a subshape $\rho / \nu \in \J(\sigma)$ with its \emph{lattice path}, which is the sequence of steps of the form $(-1,0)$ and $(0,1)$ connecting the point $(a,0)$ to $(0,b)$ (in the coordinate system defined above) given by the southeast border of $\rho$. In this way $\J(\sigma)$ is in bijection with the set of lattice paths connecting $(a,0)$ to $(0,b)$ that stay within the diagram of~$\sigma$. For an example of this bijection see Figure~\ref{fig:latticepath}.

\begin{figure}
\begin{center}
\begin{tikzpicture}
\node at (0,0) {\begin{ytableau} \none & *(yellow) &   \\ *(yellow) &  &  \\ *(yellow) \end{ytableau}};
\def\x{0.6}
\draw[red,ultra thick] (-1.5*\x,-1.5*\x) -- (-0.5*\x,-1.5*\x) -- (-0.5*\x,0.5*\x) -- (0.5*\x,0.5*\x) -- (0.5*\x,1.5*\x) -- (1.5*\x,1.5*\x);
\end{tikzpicture}
\end{center}
\caption{With $\sigma = (3,3,1) / (1)$, we depict $(2,1,1) / (1) \in \J(\sigma)$ shaded in yellow and its associated lattice path in red.} \label{fig:latticepath}
\end{figure}

\begin{definition}\label{def:outward}
Let $\sigma$ be a skew shape. We say $\sigma$ is \emph{connected} if the poset $P_{\sigma}$ is connected. Suppose $\sigma$ is connected. Then an {\em outward corner} of $\sigma$ is two consecutive steps along the boundary of $\sigma$ that do not belong to the same line and do not border a common box of $\sigma$.  We say that a corner \emph{occurs} at the lattice point $(i,j)$ where its two steps meet. We write~$C(\sigma)$ for the set of outward corners of $\sigma$.
\end{definition}

Note that because $\sigma$ is a skew shape, the outward corners of $\sigma$ are either {\em northwest corners} or {\em southeast corners}, i.e.,~they comprise part of the northwest border of $\sigma$ or the southeast border, respectively.

The following notation will be convenient for us: given a box $[i,j]\in\sigma$, we define
\[C_{ij}(\sigma) = \{\text{corners $c\in C(\sigma)$ occurring strictly northwest or strictly southeast of $[i,j]$}\}.\]
When we say that a corner $c$ occurs ``strictly northwest'' or ``strictly southeast'' of a box~$[i,j]$, we mean that it occurs strictly northwest (respectively strictly southeast) of the {\em center} of that box.  For example, a corner at the point $(i,j)$ occurs strictly southeast of the box~$[i,j]$. Figure~\ref{fig:corners} illustrates our notation for corners. 

For $c \in C(\sigma)$ and $\mu$ a probability distribution on $\J(\sigma)$ we use the notation $\PP_{\mu}(c)$ to mean the probability with respect to $\mu$ that a subshape of~$\sigma$, thought of as a lattice path, includes the two steps of the corner $c$. It is important to note that if the corner~$c \in C(\sigma)$ occurs at~$(i,j)$, then saying that the lattice path $\rho \in \J(c)$ includes~$c$ is a stronger statement than merely saying that $\rho$ passes through $(i,j)$.

\begin{figure}
\begin{tikzpicture}
\node at (0,0) {\begin{ytableau} \none & \none & \none &  &  &    &  \\ \none & \none & \none &  & && \\ \none & \none &  & *(yellow)   & \\ &  &  &\\ \end{ytableau}};
\def\x{0.6}
\fill (-0.5*\x,0*\x) circle (0.1cm);
\fill[white] (-0.5*\x,0*\x) circle (0.05cm);
\fill (-1.5*\x,-1*\x) circle (0.1cm);
\fill (1.5*\x,0*\x) circle (0.1cm);
\fill (0.5*\x,-1*\x) circle (0.1cm);
\fill[white] (0.5*\x,-1*\x) circle (0.05cm);
\end{tikzpicture}
\caption{A diagram explaining our notation for corners. Box~$[3,4]$ of $\sigma$ is shaded yellow and the points where corners~$c \in C(\sigma)$ occur are marked with a circle; the points where~$c \in C_{34}(\sigma)$ occur are open circles.} \label{fig:corners}
\end{figure}

\begin{definition}\label{def:disp}
Let $\sigma$ be a connected skew shape with height $a$ and width $b$. The \emph{main anti-diagonal of $\sigma$} is the line joining $(a,0)$ to $(0,b)$. For $(i,j) \in \RR^2$ let $\vec{d}(i,j)$ denote the vector from $(i,j)$ to the main anti-diagonal of $\sigma$ (and orthogonal to it). For an outward corner~$c \in C(\sigma)$ that occurs at $(i,j)$ we define the \emph{displacement} of $c$ to be
\[\delta(c) \coleq \begin{cases} \textrm{the unique $x\in \RR$ with $\vec{d}(0,0) = x \cdot \vec{d}(i,j)$} &\textrm{if $c$ is a northwest corner}, \\
\textrm{the unique $x\in \RR$ with $\vec{d}(a,b) = x \cdot \vec{d}(i,j)$} &\textrm{if $c$ is a southeast corner}. \end{cases}\]
Note that $\delta(c)$ is a signed quantity. Explicitly,
\[\delta(c) = \begin{cases} 1 - \frac{i}{a} - \frac{j}{b} &\textrm{if $c$ is a northwest corner}, \\
-1 + \frac{i}{a} + \frac{j}{b} &\textrm{if $c$ is a southeast corner}. \end{cases}\]
\end{definition}

Now we can state main theorem of our paper, which computes the expected jaggedness of a subshape of $\sigma$ for any toggle-symmetric distribution as the harmonic mean of its height and width, up to a sum of signed correction terms.

\begin{thm}\label{thm:main}
Let $\sigma$ be a connected skew shape with height $a$ and width $b$.  Let $\mu$ be any toggle-symmetric probability distribution on $\J(\sigma)$.  Then the expected jaggedness of a subshape of $\sigma$ with respect to the distribution $\mu$ is
\begin{equation}\label{e:main}
\EE_\mu(\jag) = 
\frac{2ab}{a+b}\left( 1 + \sum_{c\in C(\sigma)} \!\delta(c)\,\PP_\mu(c)\right).
\end{equation}
\end{thm}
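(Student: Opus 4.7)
The plan is to establish a pointwise identity on $\J(\sigma)$ of the form
\begin{equation*}
\jag(\rho) \;=\; \frac{2ab}{a+b}\!\left( 1 + \sum_{c \in C(\sigma)} \!\delta(c)\,\Ind_c(\rho)\right) + \sum_{p \in P_\sigma} \alpha_p\,\T_p(\rho), \tag{$\ast$}
\end{equation*}
valid for every subshape $\rho$, where $\Ind_c(\rho)=1$ iff the lattice path of $\rho$ includes both edges of the outward corner $c$, and $\alpha_p\in\RR$ are constants depending only on $p$ and $\sigma$. Given $(\ast)$, the theorem follows immediately: applying $\EE_\mu$ to both sides, toggle-symmetry gives $\EE_\mu(\T_p)=0$ for every $p$ so the last sum drops out, while $\EE_\mu(\Ind_c)=\PP_\mu(c)$ by definition, producing exactly~\eqref{e:main}.

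The main work is to prove $(\ast)$ and identify the coefficients. I would begin with the rectangle case, where $C(\sigma)=\emptyset$ and $(\ast)$ collapses to $\jag(\rho) = \tfrac{2ab}{a+b} + \sum_p \alpha_p\,\T_p(\rho)$. Recall that $\T^+_{[i,j]}(\rho)=1$ iff the lattice path has a ``UR-turn'' (up then right) at the lattice point $(i-1,j-1)$, and $\T^-_{[i,j]}(\rho)=1$ iff it has an ``RU-turn'' at $(i,j)$, so in the rectangle $\jag(\rho)$ simply counts the total number of turns of the path. Checking small cases (e.g.,~$2\times 2$, $1\times b$, $2\times 3$) points to the explicit formula
\begin{equation*}
\alpha_{[i,j]} \;=\; \frac{2ab}{a+b}\!\left(\frac{i-(a+1)/2}{a} + \frac{j-(b+1)/2}{b}\right),
\end{equation*}
i.e.,~the signed normalized distance from the center of $[i,j]$ to the main anti-diagonal, rescaled by the harmonic mean. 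To verify $(\ast)$ in this case I would induct on $\#\rho$, starting from $\rho=\emptyset$ and comparing how both sides change under a single toggle $\rho\mapsto\rho\cup\{q\}$. The change in $\sum_p\alpha_p\T_p$ is local (only boxes adjacent to $q$ affected), and a short case analysis on the four possible in/out directions of the path at $q$ reduces invariance to a pair of conditions on the $\alpha$-values: two ``discrete Laplacian'' identities of the form $2\alpha(i,j)=\alpha(i\pm 1,j)+\alpha(i,j\pm 1)$, plus a pair of inhomogeneous identities of the form $\alpha(i+1,j)+\alpha(i,j+1)=2\alpha(i,j)+2$ and $\alpha(i-1,j)+\alpha(i,j-1)=2\alpha(i,j)-2$, all of which the linear formula above satisfies.

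For the skew case, the interior analysis from the rectangle argument still applies, but the pairing between $\T^+$ at $(i-1,j-1)$ and $\T^-$ at $(i,j)$ that drives the rectangle identity breaks precisely at outward corners: at such a corner one of the two boxes needed for the pairing lies outside $\sigma$, producing an uncancelled boundary contribution whenever the path visits the corner. I expect this contribution to match $\tfrac{2ab}{a+b}\delta(c)\,\Ind_c(\rho)$ exactly, since $\delta(c)$ is the signed normalized distance from $c$ to the main anti-diagonal, and NW versus SE outward corners pick up opposite signs as prescribed by Definition~\ref{def:disp}. The main obstacle, and the bulk of the combinatorial work, is this boundary bookkeeping: in the skew case the coefficients $\alpha_p$ for boxes near outward corners may require boundary adjustments, and one must verify that toggling a box $q$ adjacent to an outward corner changes $\jag$, $\sum_c \delta(c)\Ind_c$, and $\sum_p \alpha_p\T_p$ by compatible amounts in every configuration. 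The symmetry of the right-hand side under $(a,b)\leftrightarrow(b,a)$ and the correct specialization to the rectangle will serve as useful consistency checks throughout.
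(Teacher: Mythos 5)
Your overall strategy is sound, and it is a genuinely different organization from the paper's: you want a single pointwise identity expressing $\jag$ as $\frac{2ab}{a+b}\bigl(1+\sum_c\delta(c)\Ind_c\bigr)$ plus an element of the linear span of the signed toggleability statistics $\T_p$, verified by induction along toggles; the paper instead builds ``rook'' random variables $R_{ij}$ (signed quadrant sums of the $\T^{\pm}$), proves pointwise that $R_{ij}(\rho)=1+\#C_{ij}(\rho)$, shows that modulo the $\T_p$ each $R_{ij}$ reduces to the row-plus-column sum of $\T^{+}$, and then takes an integer combination $\sum r_{ij}R_{ij}$ with every row of $r$ summing to $b$ and every column to $a$. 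Your target identity $(\ast)$ really does hold --- it can be extracted from the paper's argument, since $\frac{a+b}{2}\jag - ab\bigl(1+\sum_c\delta(c)\Ind_c\bigr)$ is an integer combination of the $\T_p$ determined by the rook weights --- and your rectangle computation is correct: the affine coefficient $\alpha_{[i,j]}$ you propose passes the small checks and satisfies the inhomogeneous difference identities you state. (Be careful that the ``homogeneous Laplacian'' conditions only hold for pure second differences or, in mixed directions, when $a=b$; the correct local conditions are exactly the two inhomogeneous ones you list together with pure second-difference relations, all satisfied by an affine function.)

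The genuine gap is that the skew case --- which is the entire content of the theorem beyond the rectangle, since the corner terms $\delta(c)\PP_\mu(c)$ are the point --- is not actually carried out. You have not determined the coefficients $\alpha_p$ near outward corners, nor verified that toggling a box adjacent to a corner changes $\jag$, $\sum_c\delta(c)\Ind_c$, and $\sum_p\alpha_p\T_p$ compatibly; you explicitly defer this as ``the main obstacle.'' This is precisely where the displacement $\delta(c)$ has to be produced, and it is not automatic that the affine formula for $\alpha$ extends unmodified (in the paper's construction the analogous coefficients arise as quadrant sums of a rook placement supported on the southeast border strip, and the identity $\sum_{[i,j]:\,c\in C_{ij}(\sigma)} r_{ij}=ab\,\delta(c)$ is what makes the corner weights come out right). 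The rook formulation is essentially a device for avoiding exactly the boundary bookkeeping you flag: each $R_{ij}$ is manifestly $1+\#C_{ij}(\rho)$ because an outward corner used by the path contributes a missing $-1$, so the corner corrections appear for free rather than having to be matched case by case. To complete your proof you would either need to carry out that corner-by-corner verification in full, or reorganize around local identities of the rook type.
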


In the rest of this section we will prove Theorem \ref{thm:main}. In order to that, we define a set of random variables $R_{ij}$ that we refer to as \emph{rooks}. The proof of the main theorem involves strategically placing rooks on our skew shape $\sigma$. 

Let $[i,j]$ be a box in~$\sigma$.  We write $\T_{ij}^+$ and $\T_{ij}^-$ for the toggle-indicator random variables $T_{[i,j]}^{+}$ and $T_{[i,j]}^{-}$ on $\J(P_{\sigma}) = \J(\sigma)$ defined in Section~\ref{sec:togglesym}.  We define the rook random variable~$R_{ij}\colon \J(\sigma) \to \RR$ as follows:
\begin{equation}\label{eq:Rij}
R_{ij} \coleq \sum_{\substack{i'\le i,\,j'\le j \\ [i',j']\in\sigma }}\!\T_{i'j'}^+ 
+ \sum_{\substack{i'\ge i,\,j'\ge j \\ [i',j']\in\sigma }}\!\T_{i'j'}^- 
- \sum_{\substack{i'< i,\,j'< j \\ [i',j']\in\sigma }}\!\T_{i'j'}^- 
- \sum_{\substack{i'> i,\,j'> j \\ [i',j']\in\sigma }}\!\T_{i'j'}^+.
\end{equation}
The equation defining $R_{ij}$ is complicated and it is best understood by a drawing as in Figure~\ref{fig:rook}.  In this figure, we record the coefficients of the terms $T_{i'j'}^+$ and $T_{i'j'}^-$ in $R_{ij}$ in the northwest and southeast corners, respectively, of the box~$[i',j']$. The reason we call~$R_{ij}$ a rook is explained by the next lemma, which says that for a toggle-symmetric distribution $\mu$ only the toggleability statistics corresponding to boxes in the same row or column as $[i,j]$ contribute to the expectation $\EE_{\mu}(R_{ij})$.

\begin{figure}
{\ytableausetup{boxsize=2em}\begin{ytableau}
\scalebox{1.0}[1.1]{$^{1}\,\,_{\text{-}1}$} & \scalebox{1.0}[1.1]{$^{1}\,\,\,\,\,\,$} & & & & & & \\
\scalebox{1.0}[1.1]{$^{1}\,\,_{\text{-}1}$} & \scalebox{1.0}[1.1]{$^{1}\,\,\,\,\,\,$} & & & & & \\
\scalebox{1.0}[1.1]{$^{1}\,\,\,\,\,\,$} & \scalebox{1.0}[1.1]{$^{1}\,\,\,_{1}$} & \scalebox{1.0}[1.1]{$\,\,\,\,\,\,_{1}$} & \scalebox{1.0}[1.1]{$\,\,\,\,_{1}$} & \scalebox{1.0}[1.1]{$\,\,\,\,_{1}$} & \scalebox{1.0}[1.1]{$\,\,\,\,_{1}$}\\
 & \scalebox{1.0}[1.1]{$\,\,\,\,\,\,_{1}$} & \scalebox{1.0}[1.1]{$^{\text{-}1}\,\,_{1}$} & \scalebox{1.0}[1.1]{$^{\text{-}1}\,\,_{1}$} & \scalebox{1.0}[1.1]{$^{\text{-}1}\,\,_{1}$} & \scalebox{1.0}[1.1]{$^{\text{-}1}\,\,_{1}$} \\
  & \scalebox{1.0}[1.1]{$\,\,\,\,\,\,_{1}$} & \scalebox{1.0}[1.1]{$^{\text{-}1}\,\,_{1}$} & \scalebox{1.0}[1.1]{$^{\text{-}1}\,\,_{1}$} & \scalebox{1.0}[1.1]{$^{\text{-}1}\,\,_{1}$} & \scalebox{1.0}[1.1]{$^{\text{-}1}\,\,_{1}$} 
\end{ytableau}\ytableausetup{boxsize=1.5em} }
\caption{An example of a ``rook'' at the box $[3,2]$.  }\label{fig:rook}
\end{figure}

\begin{lemma} \label{lem:rook} Let $\sigma$ be a skew shape and $\mu$ a toggle-symmetric probability distribution on $\J(\sigma)$. Then for any $[i,j] \in \sigma$ we have 
\[\EE_{\mu}(R_{ij}) = \sum_{[i',j] \in \sigma} \EE_{\mu}(\T^{+}_{i',j}) + \sum_{[i,j'] \in \sigma} \EE_{\mu}(\T^{+}_{i,j'}).\] 
\end{lemma}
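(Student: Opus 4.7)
The plan is to use linearity of expectation together with toggle-symmetry to collapse the four-term sum defining $R_{ij}$ into the claimed row-plus-column sum. First I would apply linearity of expectation to~\eqref{eq:Rij} and then use the hypothesis $\EE_\mu(\T^-_{i'j'}) = \EE_\mu(\T^+_{i'j'})$ (valid at every box by toggle-symmetry of $\mu$) to rewrite
\[\EE_\mu(R_{ij}) = \sum_{i'\le i,\,j'\le j} \!\EE_\mu(\T^+_{i'j'}) + \sum_{i'\ge i,\,j'\ge j} \!\EE_\mu(\T^+_{i'j'}) - \sum_{i'< i,\,j'< j} \!\EE_\mu(\T^+_{i'j'}) - \sum_{i'> i,\,j'> j} \!\EE_\mu(\T^+_{i'j'}),\]
where the indices $[i',j']$ range over $\sigma$ in each sum.

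Next I would compute the net coefficient of $\EE_\mu(\T^+_{i'j'})$ for each box $[i',j']$ of $\sigma$ by a brief case analysis on the position of $[i',j']$ relative to $[i,j]$. The plane splits into the nine regions obtained by the two horizontal and two vertical strict/equal relations; carrying out the addition, every box strictly northwest of $[i,j]$ (first and third sums) and every box strictly southeast of $[i,j]$ (second and fourth sums) contributes $+1-1 = 0$, every box strictly northeast or strictly southwest of $[i,j]$ contributes $0$, every box in the same row or column as $[i,j]$ but distinct from it contributes $+1$, and $[i,j]$ itself contributes $+1 + 1 = 2$ because it is counted in both of the $\le$ and $\ge$ sums.

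Summing up, the only surviving contributions come from boxes lying in the row or the column through $[i,j]$, which is exactly the picture suggested by the ``rook'' terminology. Combining the row contribution, the column contribution, and treating the doubled contribution at $[i,j]$ itself as the overlap between the row and column, we get
\[\EE_\mu(R_{ij}) = \sum_{[i',j]\in\sigma} \EE_\mu(\T^+_{i',j}) + \sum_{[i,j']\in\sigma} \EE_\mu(\T^+_{i,j'}),\]
as claimed. The only thing to be careful about is the bookkeeping at $[i,j]$ itself and along the two boundary strips of $\sigma$; everything else is a direct verification, so there is no real obstacle beyond keeping the case analysis tidy.
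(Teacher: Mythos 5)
Your proposal is correct and follows essentially the same route as the paper: linearity of expectation plus toggle-symmetry at each box to cancel the contributions of boxes strictly northwest and strictly southeast of $[i,j]$, leaving only the row and column of $[i,j]$ (with $[i,j]$ itself correctly counted twice on both sides). The paper phrases this by grouping the differences $\T^+_{i'j'}-\T^-_{i'j'}$ over those two regions and noting they have zero expectation, which is the same bookkeeping you carry out coefficient by coefficient.
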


\begin{proof}
 Expanding formula~(\ref{eq:Rij}),
 \[\begin{array}{r l }
R_{ij}=&  \sum_{\substack{i'< i,\,j'< j \\ [i',j']\in\sigma }}\!\T_{i'j'}^+  - 
 \sum_{\substack{i'< i,\,j'< j \\ [i',j']\in\sigma }}\!\T_{i'j'}^- +
 \sum_{\substack{i'> i,\,j'> j \\ [i',j']\in\sigma }}\!\T_{i'j'}^+ -
  \sum_{\substack{i'> i,\,j'> j \\ [i',j']\in\sigma }}\!\T_{i'j'}^- \\[.5cm]
  & + \sum_{[i,j'] \in \sigma}\T^{+}_{i',j} + \sum_{[i,j'] \in \sigma}\T^{+}_{i,j'}.\\
\end{array}\]
Since $\mu$ is a toggle symmetric distribution by linearity of expectation we get
\[\EE_{\mu}\left(\sum_{\substack{i'< i,\,j'< j \\ [i',j']\in\sigma }}\T_{i'j'}^+  - 
 \sum_{\substack{i'< i,\,j'< j \\ [i',j']\in\sigma }}\T_{i'j'}^-\right)=0; \qquad 
 \EE_{\mu}\left(\sum_{\substack{i'> i,\,j'> j \\ [i',j']\in\sigma }}\T_{i'j'}^+  - 
 \sum_{\substack{i'> i,\,j'> j \\ [i',j']\in\sigma }}\T_{i'j'}^-\right)=0.\] 
 Hence the claimed expression for $\EE_{\mu}(R_{ij})$ indeed holds.
\end{proof}

\begin{figure}
\begin{tikzpicture}
\node at (-0.07,0) {\ytableausetup{boxsize=2.5em} \begin{ytableau}
\none & \scalebox{1.1}[1.2]{$^{1}\,\,\,\,\,\,$} & & & & \\
\scalebox{1.1}[1.2]{$^{1}\,\,_{\text{-}1}$} & \scalebox{1.1}[1.2]{$^{1}\,\,\,\,\,\,$} & &  \\
\scalebox{1.1}[1.2]{$^{1}\,\,\,\,\,\,$} & \scalebox{1.1}[1.2]{$^{1}\,\,\,_{1}$} & \scalebox{1.1}[1.2]{$\,\,\,\,\,\,_{1}$} \\
 & \scalebox{1.1}[1.2]{$\,\,\,\,\,\,_{1}$}
\end{ytableau} \ytableausetup{boxsize=1.5em}};
\def\x{0.98}
\draw[blue,ultra thick] (-3*\x,-2*\x) -- (-3*\x,0*\x) -- (-2*\x,0*\x) -- (-2*\x,2*\x) -- (3*\x,2*\x);
\draw[red,ultra thick] (-3*\x,-2*\x) -- (-1*\x,-2*\x) -- (-1*\x,-1*\x) -- (0*\x,-1*\x) -- (0*\x,1*\x) -- (3*\x,1*\x) -- (3*\x,2*\x);
\fill (-1*\x,-1*\x) circle (0.1cm);
\fill (-2*\x,1*\x) circle (0.1cm);
\end{tikzpicture}
\caption{This figure illustrates how each subshape may contribute to~$\EE(R_{ij})$. Here $[i,j] = [3,2]$ and the points where corners $c \in C_{ij}$ occur are marked with a circle. Two lattice paths~$\rho_1,\rho_2\in \J(\sigma)$ are drawn in blue and red; we can verify that $R_{ij}(\rho_{k}) = 1 + \#C_{ij}(\rho_{k})$ for $k = 1,2$.}\label{fig:toll}
\end{figure}
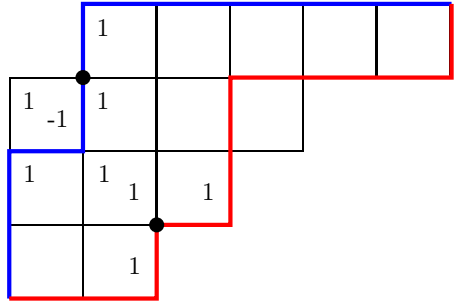

\begin{lemma} \label{lem:corners} Let $\sigma$ be a connected skew shape and $\mu$ a probability distribution on $\J(\sigma)$. Then for any $[i,j] \in \sigma$ we have 
\[\EE_{\mu}(R_{ij}) = 1 + \sum_{c \in C_{ij}(\sigma)}\PP_{\mu}(c).\]
\end{lemma}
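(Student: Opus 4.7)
My plan is to prove the stronger pointwise identity
\[R_{ij}(\rho) \;=\; 1 + \#\{c \in C_{ij}(\sigma) : \rho \text{ includes } c\}\]
for every $\rho \in \J(\sigma)$; the lemma then follows by linearity of expectation. The first step is to interpret toggleability events as turns of the lattice path of $\rho$: for $[i',j'] \in \sigma$, $\T^-_{i'j'}(\rho) = 1$ iff the path makes a right-then-up (RU) turn at the southeast corner $(i',j')$ of $[i',j']$, and $\T^+_{i'j'}(\rho) = 1$ iff it makes an up-then-right (UR) turn at the northwest corner $(i'{-}1,j'{-}1)$. Inspecting~\eqref{eq:Rij}, the coefficient of each toggleability term in $R_{ij}$ factors as $\epsilon(u,v)\,\eta(u,v)$, where $\epsilon(u,v) = +1$ if $u<i$ and $v<j$, $\epsilon(u,v) = -1$ if $u \ge i$ and $v \ge j$, $\epsilon(u,v) = 0$ otherwise, and $\eta(u,v)$ is $+1$ for UR turns and $-1$ for RU turns.

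The main technical step is to evaluate the extended signed turn sum $S(\rho) \coleq \sum_{(u,v)} \epsilon(u,v)\,\eta(u,v)$ now ranging over \emph{all} turns of the lattice path of $\rho$, including those at lattice points whose associated box lies outside $\sigma$. I claim $S(\rho) = 1$ for every $\rho$. Indeed, by monotonicity of the lattice path, exactly one of the strict northwest rectangle $\{u<i,\, v<j\}$ and the weakly southeast rectangle $\{u \ge i,\, v \ge j\}$ is visited by the path. In the first case the path enters that region via an up-step and leaves via a right-step, so the step-direction sequence within it starts with U and ends with R, yielding $\#\text{UR} - \#\text{RU} = +1$ and contributing $(+1)(+1) = +1$ to $S(\rho)$; in the second case the entry is a right-step and the exit is an up-step, giving $\#\text{UR} - \#\text{RU} = -1$ and contributing $(-1)(-1) = +1$. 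Either way $S(\rho) = 1$.

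The discrepancy $S(\rho) - R_{ij}(\rho)$ comes entirely from ``invalid'' turns, where the would-be associated box lies outside $\sigma$. A short local analysis identifies invalid UR turns at $(u,v)$ with outward southeast corners of $\sigma$ included in the path, and invalid RU turns with outward northwest corners. To match this correction with the desired corner count, I would use the key geometric observation that, since $\sigma = \lambda/\nu$ is a skew shape and $[i,j] \in \sigma$, monotonicity of $\nu$ prevents any northwest corner of $\sigma$ from being weakly southeast of $[i,j]$, and monotonicity of $\lambda$ prevents any southeast corner from being strictly northwest of $[i,j]$. This forces each corner $c \in C_{ij}(\sigma)$ included in the path to contribute exactly $+1$ to $R_{ij}(\rho) - S(\rho)$, whence $R_{ij}(\rho) = 1 + \#\{c \in C_{ij}(\sigma) : \rho \text{ includes } c\}$ as desired. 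The main obstacles will be the careful sign bookkeeping above and the case analysis identifying invalid turns with outward corners of $\sigma$ included in the path.
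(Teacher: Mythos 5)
Your proposal is correct and follows essentially the same route as the paper: both establish the pointwise identity $R_{ij}(\rho)=1+\#\{c\in C_{ij}(\sigma):\rho\text{ includes }c\}$ by reading the rook coefficients as signed weights on the turns of the lattice path, showing the turn weights total $1$, and accounting for the extra $+1$ at each included outward corner, then taking expectations. The only difference is presentational: the paper argues by inspection of Figures~\ref{fig:rook} and~\ref{fig:toll}, while you make the same bookkeeping rigorous (the extended sum $S(\rho)=1$ and the observation that northwest/southeast outward corners cannot lie in the wrong quadrant relative to a box of $\sigma$), which is a welcome sharpening rather than a new approach.
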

\begin{proof}
Let $\rho \in \J(\sigma)$. Let $C_{ij}(\rho)$ be the set of all corners $c\in C_{ij}(\sigma)$ included in the lattice path $\rho$. We observe that $R_{ij}(\rho)=1+\#C_{ij}(\rho)$.  This observation is again best understood by a picture, as in Figures \ref{fig:rook} and \ref{fig:toll}.  
In Figure~\ref{fig:rook}, the set $C_{ij}(\sigma)$ is empty, and the claim that $R_{ij}(\rho) = 1$ for any lattice path $\rho$ drawn through the skew shape corresponds to the observation that the turns in $\rho$ always have total weight 1 (with the weights as drawn).  As usual, we identify lattice paths and subshapes.  

The more general formula $R_{ij}(\rho) = 1 + \#C_{ij}(\rho)$ then corresponds to the fact that any outward corner $c\in C_{ij}(\sigma)$ used by $\rho$ is no longer labeled $-1$, simply because there is no box at $c$ to be toggled in or toggled out.  This is illustrated in Figure~\ref{fig:toll}.

But
\[ \EE_{\mu}(\#C_{ij}(\rho)) = \sum_{c \in C_{ij}(\sigma)}\PP_{\mu}(c)\]
and hence the claimed expression for~$\EE_{\mu}(R_{ij})$ indeed holds.
\end{proof}

\begin{lemma}\label{lem:rookplacement}
For any connected skew shape $\sigma$ with height $a$ and width $b$ there exist integral coefficients~$r_{ij} \in \ZZ$ for $[i,j] \in \sigma$ such that 
\begin{itemize}
\item for all $1 \leq i \leq a$, $\sum_{[i,j'] \in \sigma} r_{i,j'} = b$;
\item for all $1 \leq j \leq b$, $\sum_{[i',j] \in \sigma} r_{i',j} = a$.
\end{itemize}
\end{lemma}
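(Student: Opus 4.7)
The approach is to encode the required coefficients as integer edge-weights on a bipartite graph and to construct them by repeatedly trimming leaves of a spanning tree.

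To set this up, form the bipartite graph $G$ with vertex set $\{R_1,\ldots,R_a\} \sqcup \{C_1,\ldots,C_b\}$ and one edge $e_{ij}$ joining $R_i$ to $C_j$ for each box $[i,j] \in \sigma$. The desired coefficients $r_{ij}$ are precisely integer edge-weights on $G$ such that the weighted degree of each $R_i$ is $b$ and the weighted degree of each $C_j$ is $a$. A preliminary observation is that $G$ is connected: since each row and each column of a skew shape is a contiguous interval of boxes, any two boxes sharing a row or column of $\sigma$ are joined by a chain of cover relations in $P_\sigma$, so the hypothesis that $\sigma$ is connected as a poset forces $G$ to be connected.

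Next, fix any spanning tree $T$ of $G$; it has $a+b-1$ edges. Initialize $r_{ij} \coleq 0$ for every $[i,j] \in \sigma$ whose edge $e_{ij}$ does not lie in $T$. Then iteratively prune $T$ as follows: while more than one vertex remains in the current tree, pick any leaf $v$, let $e_{ij}$ be its unique remaining incident edge, and define $r_{ij}$ to be the unique integer that makes the constraint at $v$ hold --- row-sum $b$ if $v = R_i$, column-sum $a$ if $v = C_j$. By the leaf condition, every other box in the same row or column as $v$ has had its $r$-value specified already (either zero from the initialization, or determined when its other endpoint was pruned at an earlier step), so this integer update is well-defined. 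Then remove $v$ and $e_{ij}$ from the tree and continue.

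After $a+b-1$ iterations a single vertex $v^\ast$ remains, and every row- and column-sum constraint other than the one at $v^\ast$ is satisfied. The constraint at $v^\ast$ holds automatically, because the identity $\sum_i b = ab = \sum_j a$ witnesses the unique linear dependency among the $a+b$ constraints, so any $a+b-1$ of them imply the last. The main obstacle is really the bookkeeping in the leaf-pruning loop: one must verify that when a leaf $v$ is chosen, exactly one box incident to $v$ still has an unassigned $r$-value. This is immediate from $T$ being a tree and $v$ being a leaf, but it needs to be stated carefully.
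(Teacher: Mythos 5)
Your proof is correct, and it takes a more abstract route than the paper's. The paper also implicitly works in your bipartite row--column graph $G$, but instead of an arbitrary spanning tree it uses one specific spanning subgraph: the southeast border strip $\{[i,j]\in\sigma : [i+1,j+1]\notin\sigma\}$, whose $a+b-1$ boxes form a Hamiltonian path in $G$ (consecutive strip boxes share a row or column, and every row and every column meets the strip). The paper then assigns coefficients greedily along this path from the southwestern end, which is exactly your leaf-pruning specialized to a path pruned from one endpoint; all boxes off the strip get coefficient $0$, matching your zero-initialization of non-tree edges. What your version buys is generality (any spanning tree works, and connectivity of $G$ is all that is needed) and a cleaner treatment of the endgame: you justify the final constraint at $v^{\ast}$ via the unique linear dependency $\sum_i b = ab = \sum_j a$, whereas the paper disposes of this point with the briefer remark that every row and column contains a strip box. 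What the paper's version buys is an explicit, canonical placement supported on the border strip (as in its Figure~5), though nothing in the subsequent proof of the main theorem actually uses that support.
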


\begin{proof}
If we interpret the coefficient $r_{ij}$ as the number (possibly negative) of rooks placed at box $[i,j] \in \sigma$, the equalities say that each row should be attacked by a total of~$b$ rooks and each column by a total of~$a$ rooks. There are many possible such placements. Here is one. Let $\B\coleq\{[i,j]\in\sigma \col [i{+}1,j{+}1]\notin \sigma\}$ denote the set of boxes in the \emph{southeast border strip of $\sigma$}. We claim there is a unique choice of~$r_{ij}$ satisfying the desired equalities with $r_{ij} = 0$ if $[i,j] \notin \B$. Let $b_1,b_2,\ldots,b_m$ be the elements of~$\B$ in the unique order so that $b_1$ is southwesternmost, $b_m$ is northeasternmost, and $b_k$ is adjacent to $b_{k+1}$ for all $1 \leq k < m$. Then for each $1 \leq k \leq m$, exactly one of the following holds:
\begin{enumerate}[label=(\Roman*)]
\item $b_l$ is not in the same row as $b_k$ for all $l > k$;
\item $b_l$ is not in the same column as $b_k$ for all~$l > k$.
\end{enumerate}
Thus for $k=1,\ldots,m$ with $b_k = [i_k,j_k]$, we can choose the corresponding coefficients~$r_{i_k,j_k}$ in order: when we are in case~(I) we choose $r_{i_k,j_k}$ so that $\sum_{[i_k,j] \in \sigma} r_{i_k,j} = b$; when we are in case~(II) we choose $r_{i_k,j_k}$ so that $\sum_{[i,j_k] \in \sigma} r_{i,j_k} = a$. For each row or column, there is at least one $b_k$ in that row or column, so in the end all the equations will be satisfied. The result is an assignment of coefficients that looks like Figure~\ref{fig:rookplacement}.
\end{proof}

\begin{figure}
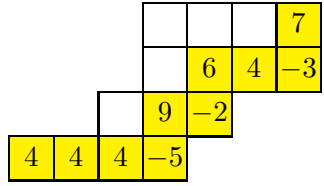

\begin{center} 
\begin{ytableau}
\none & \none & \none &  &  &    &*(yellow)7  \\
\none & \none & \none &  & *(yellow) 6&*(yellow)4 &*(yellow)-3 \\
\none & \none &  &  *(yellow)9 &*(yellow)-2  \\
*(yellow)4&*(yellow)4  & *(yellow)4 &*(yellow)-5 \\
\end{ytableau}
\end{center}
\caption{An example of a rook placement that satisfies Lemma~\ref{lem:rookplacement}. Here~$a=4$ and $b=7$; the southeast border strip is shaded in yellow.} \label{fig:rookplacement}
\end{figure}

\begin{proof}[Proof of Theorem~\ref{thm:main}]
Let $r_{ij}$ be the coefficients from Lemma~\ref{lem:rookplacement}. Note that the sum of all coefficients is $\sum_{[i,j]\in \sigma}r_{ij}=ab$. Also, for any $[i',j']\in \sigma$ the sum of coefficients in its row and its column is~$\sum_{\substack{[i,j]\in \sigma\\ i=i'}}r_{ij} +\sum_{\substack{[i,j]\in \sigma\\ j=j'}}r_{ij}=a+b$. Using Lemma~\ref{lem:rook}, we get
\begin{align}\label{eq:1}
\EE\left(\sum_{[i,j] \in \sigma} r_{ij}R_{i,j}\right)  &= \sum_{[i,j]\in \sigma} r_{ij}\left( \sum_{[i',j] \in \sigma}\EE(\T^{+}_{i',j} )+ \sum_{[i,j'] \in \sigma} \EE(\T^{+}_{i,j'})\right) \\
&= \sum_{[i,j]\in \sigma} \left( \sum_{[i',j]\in \sigma} r_{i',j}+ \sum_{[i,j']\in \sigma} r_{i,j'}\right) \EE(\T^+_{i,j}) \nonumber \\
&=(a+b) \sum_{[i,j]\in \sigma} \EE(\T^+_{i,j}) \nonumber
\end{align}
On the other hand, by Lemma~\ref{lem:corners},
\begin{align}\label{eq:2}
\EE\left(\sum_{[i,j] \in \sigma} r_{ij}R_{i,j}\right) &=\sum_{[i,j]\in \sigma} r_{ij}\left( 1+ \sum_{c\in C_{ij}(\sigma)}\PP_{\mu}(c) \right) \\
&=\sum_{[i,j]\in \sigma} r_{ij} + \sum_{[i,j]\in \sigma} r_{ij}\sum_{c\in C_{ij}(\sigma)}\PP_{\mu}(c) \nonumber \\
&=ab + \sum_{c\in C(\sigma)}
\left(\sum_{\substack{[i,j]\in\sigma \text{ with} \\C_{ij}(\sigma)\ni c}} r_{ij}\right)\PP_{\mu}(c) \nonumber
\end{align}
As depicted in Figure \ref{fig:cornervalues}, for any corner $c\in C(\sigma)$ occurring at $(x,y)$ and for any~$[i,j]\in \sigma$, we have $~c \in C_{ij}(\sigma)$ if and only if ($x \ge i$ and $y\ge j$) or ($x<i$ and $y<j$). Let $c \in C(\sigma)$ be a southeast corner occurring at $(x,y)$. We have
\begin{align*}
\sum_{\substack{[i,j]\in\sigma  \text{ with}\\ C_{ij}(\sigma)\ni c}} r_{ij}=& \sum_{[i,j]\in \sigma} r_{ij} - \sum_{\substack{[i,j]\in\sigma \\ i\le x ; j> y}} r_{ij} - \sum_{\substack{[i,j]\in\sigma \\ i > x ; j\le y }} r_{ij}\\
&=ab-(a-x)b -(b-y)a\\
&= ab\left(\frac{x}{a}+\frac{y}{b}-1\right).
\end{align*}
With similar calculations we can see for any $c \in C(\sigma)$ a northeast corner occurring at~$(x,y)$ we also have $\sum_{\substack{[i,j]\in\sigma\\ C_{ij}(\sigma)\ni c}} r_{ij}=ab(1-\frac{x}{a}-\frac{y}{b})$. In other words, for $c\in C(\sigma)$,
\begin{equation} \label{eq:3}
\sum_{\substack{[i,j]\in\sigma \\ c\in C_{ij}(\sigma)}} r_{ij}=ab\cdot\delta(c).
\end{equation}
Putting equations (\ref{eq:1}), (\ref{eq:2}) and (\ref{eq:3}) together yields
\[(a+b) \sum_{[i,j]\in\sigma}\EE(\T^+_{i,j}(\sigma))=ab\left(1 + \sum_{c\in C(\sigma)} \delta(c) \PP(c)\right)
.\]
But since $\mu$ is a toggle-symmetric measure, $\EE_{\mu}(\jag)=2\sum_{[i,j]\in \sigma}\EE(\T^+_{i,j}(\sigma))$.  Hence the claimed formula for $\EE_{\mu}(\jag)$ holds.
\end{proof}

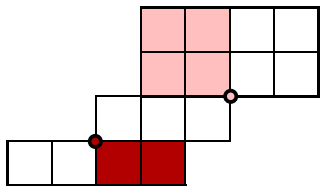
\begin{figure}
\definecolor{DRed}{rgb}{0.7,0,0}
\begin{tikzpicture}
\node at (0,0) {\begin{ytableau} \none & \none & \none & *(pink) &  *(pink)&   &  \\ \none & \none & \none &*(pink)  &*(pink) && \\
\none & \none &  &    & \\ &  & *(DRed) &*(DRed)\\ \end{ytableau}};
\def\x{0.6}
\fill[black] (-1.5*\x,-1*\x) circle (0.1cm);
\fill[DRed] (-1.5*\x,-1*\x) circle (0.05cm);
\fill[black] (1.5*\x,0*\x) circle (0.1cm);
\fill[pink] (1.5*\x,0*\x) circle (0.05cm);

\end{tikzpicture}
\caption{In the above diagram, let~$X_1$ be the set of pink boxes and~$X_2$ the set of dark red boxes. Let $c_1$ be the corner occurring at $(2,5)$ (in pink) and $c_2$ the corner at $(3,2)$ (in dark red). Then $ [i,j]\in X_1$ if and only if~$c_1 \in C_{i,j}(\sigma)$ and~$[i,j]\in X_2$ if and only if~$c_2 \in C_{i,j}(\sigma)$.} \label{fig:cornervalues}
\end{figure}

Let us say a skew shape $\sigma$ is \emph{balanced} if it is connected and $\delta(c)=0$ for all $c \in C(\sigma)$. In other words, a connected skew shape is balanced if all outward corners occur at the main anti-diagonal. An immediate corollary of our main theorem is the following:

\begin{cor} \label{cor:main}
Let $\sigma$ be a balanced skew shape with height $a$ and width $b$.  Let $\mu$ be any toggle-symmetric probability distribution on $\J(\sigma)$.  Then the expected jaggedness of a subshape in $\J(\sigma)$ with respect to the distribution $\mu$ is $\frac{2ab}{a+b}$.
\end{cor}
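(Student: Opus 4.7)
The plan is to apply Theorem~\ref{thm:main} directly, since Corollary~\ref{cor:main} is designed to be an immediate specialization of it. Concretely, I would start by invoking Theorem~\ref{thm:main} to write
\[
\EE_\mu(\jag) \;=\; \frac{2ab}{a+b}\left(1 + \sum_{c\in C(\sigma)} \delta(c)\, \PP_\mu(c)\right),
\]
valid for any toggle-symmetric $\mu$ on $\J(\sigma)$ because $\sigma$ is connected by hypothesis (being balanced includes connectedness per our definition).

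Next I would use the defining property of \emph{balanced}: every outward corner $c \in C(\sigma)$ satisfies $\delta(c) = 0$. Substituting this into the formula above collapses the entire correction sum to zero, independently of the probabilities $\PP_\mu(c)$ (which is the whole point — balancedness is exactly the condition making the correction vanish \emph{uniformly} in $\mu$). Therefore
\[
\EE_\mu(\jag) = \frac{2ab}{a+b},
\]
as desired.

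There is no real obstacle here: the entire content of Corollary~\ref{cor:main} is packaged into Theorem~\ref{thm:main} together with Definition~\ref{def:disp}. The only thing worth remarking on in the write-up is that this recovers Theorem~\ref{t:clpt} as a special case, since an $a \times b$ rectangle is vacuously balanced (it has no outward corners at all, so $C(\sigma) = \emptyset$ and the sum is empty), and more generally explains the observation from~\cite{chan2015genera} that the harmonic mean appears both under $\mu_{\lin}$ and under $\mu_{\unif}$: the answer is $\mu$-independent whenever all outward corners lie on the main anti-diagonal.
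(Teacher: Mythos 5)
Your proof is correct and matches the paper's treatment exactly: the paper presents Corollary~\ref{cor:main} as an immediate consequence of Theorem~\ref{thm:main}, obtained by observing that balancedness forces $\delta(c)=0$ for every $c\in C(\sigma)$ so that the correction sum vanishes for any toggle-symmetric $\mu$. Your side remarks (rectangles have no outward corners, and the $\mu$-independence explains the coincidence noted in \cite{chan2015genera}) are also consistent with the paper's own commentary.
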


\begin{figure}
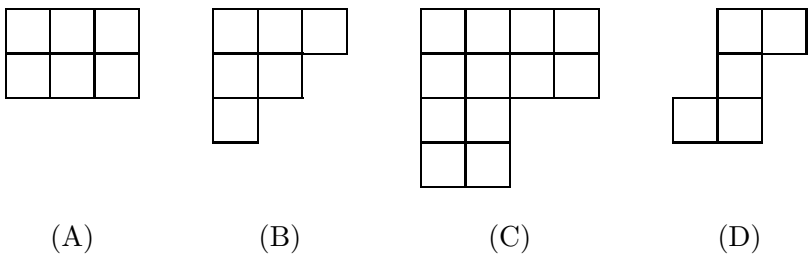

\setlength{\tabcolsep}{0.5cm}
\begin{tabular}{c c c c}
\ydiagram{3,3} & \ydiagram{3,2,1} & \ydiagram{4,4,2,2} & \ydiagram{1+2,1+1,2} \\
\\
(A) & (B) & (C) & (D)
\end{tabular}
\caption{Examples of balanced skew shapes.}\label{fig:balancedshapes}
\end{figure}

Some examples of balanced skew shapes are depicted in Figure~\ref{fig:balancedshapes}. They include rectangles like (A), staircases like (B), ``stretched'' staircases (i.e.,~staircases where we have replaced each box by a~$k \times l$ rectangle) like (C), as well as other more general shapes like (D). There are a total of $3^{\mathrm{gcd}(a,b)-1}$ balanced skew shapes with height $a$ and width~$b$ for any~$a,b \geq 1$. 

Along the same lines, say that a skew shape $\sigma$ is {\em abundant} if all of its northwest corners occur on or above its antidiagonal and all of its southeast corners occur on or below its antidiagonal.  Let us say that $\sigma$ is {\em deficient} if all of its northwest corners occur on or below the antidiagonal and all of its southeast corners occur on or above the antidiagonal.  Then we immediately get:
\begin{cor}\label{cor:abundantdeficient}
Let $\sigma$ be a skew shape of height $a$ and width $b$ and $\mu$ be any toggle-symmetric probability distribution on $\J(\sigma)$.  
\begin{itemize}
\item If $\sigma$ is abundant, then the expected jaggedness of a subshape in $\J(\sigma)$ with respect to the distribution $\mu$ is at least ${2ab}/(a\!+\!b)$.  
\item If $\sigma$ is deficient, then the expected jaggedness of a subshape with respect to $\mu$ is at most ${2ab}/(a\!+\!b)$.
\end{itemize}
\end{cor}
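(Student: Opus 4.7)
The plan is to apply Theorem~\ref{thm:main} directly and check signs. The observation is that the sign of each displacement $\delta(c)$ is governed by which side of the main antidiagonal the corner $c$ lies on, and this is precisely what the hypotheses of abundance and deficiency regulate.

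First, I would unpack the explicit formula for $\delta(c)$ given in Definition~\ref{def:disp}. For a northwest corner occurring at $(i,j)$ we have $\delta(c) = 1 - i/a - j/b$, while for a southeast corner we have $\delta(c) = -1 + i/a + j/b$. Since the main antidiagonal has equation $i/a + j/b = 1$, a point lies on or above this line (i.e., on or to the northwest of it) exactly when $i/a + j/b \leq 1$, and on or below it exactly when $i/a + j/b \geq 1$.

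Now suppose $\sigma$ is abundant. Then every northwest corner $c$ at some $(i,j)$ satisfies $i/a + j/b \leq 1$, hence $\delta(c) \geq 0$; and every southeast corner $c$ at some $(i,j)$ satisfies $i/a + j/b \geq 1$, hence $\delta(c) \geq 0$ as well. Since $\PP_\mu(c) \geq 0$ for every $c$, the correction term $\sum_{c \in C(\sigma)} \delta(c)\,\PP_\mu(c)$ appearing in the formula~\eqref{e:main} is non-negative, and therefore $\EE_\mu(\jag) \geq 2ab/(a+b)$. The deficient case is completely symmetric: both kinds of outward corners then yield $\delta(c) \leq 0$, so the correction term is non-positive and we obtain the reverse inequality.

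There is really no obstacle here: once Theorem~\ref{thm:main} is in hand, the corollary reduces to translating the geometric conditions of abundance and deficiency into sign conditions on $\delta(c)$, which is immediate from the explicit formula in Definition~\ref{def:disp}. The only mild care needed is to verify that the two cases (northwest vs.\ southeast corners) align correctly with the two sides of the antidiagonal, but this is built into the sign conventions of the displacement.
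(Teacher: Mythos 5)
Your proof is correct and is exactly the argument the paper intends (the paper simply asserts the corollary follows "immediately" from Theorem~\ref{thm:main}): the abundance/deficiency hypotheses force every displacement $\delta(c)$ to be non-negative (resp.\ non-positive), so the correction term in~\eqref{e:main} has the required sign. Your sign-check against the antidiagonal equation $i/a + j/b = 1$ is consistent with the paper's coordinate conventions, so nothing is missing.
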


We remark that in the case where $\sigma$ is a rectangle and $\mu = \mu_{\lin}$ or $\mu_{\unif}$, Corollary~\ref{cor:main} recovers a result of Chan et al.~\cite{chan2015genera}. Indeed, Remark 2.16 of~\cite{chan2015genera} points out the ``remarkable'' fact that for rectangles, the uniform and linear distributions have the same expected jaggedness; Corollary~\ref{cor:main} is a vast generalization, and perhaps even explanation, of this phenomenon.  Theorem~\ref{thm:main} also gives a reformulation of \cite[Theorem 2.8]{chan2015genera} (which deals with $\mu_{\lin}$ only) which exhibits more explicitly the way in which expected jaggedness depends on the shape of $\sigma$.

\subsection{Computing the correction terms for various toggle-symmetric distributions} \label{sec:correction}

Although Theorem~\ref{thm:main} gives an especially nice formula for $\EE_{\mu}(\jag)$ when~$\sigma$ is a balanced, even when $\sigma$ is not balanced the correction term~$\sum_{c \in C(\sigma)}\delta(c)\PP_{\mu}(c)$ in this formula is easy to compute for all of the ``natural'' toggle-symmetric distributions defined in Section~\ref{sec:pparts}, as we now explain. Of course the displacement $\delta(c)$ for $c \in C(\sigma)$ is easily computed; the issue is computing $\PP_{\mu}(c)$. By Remarks~\ref{rem:uniform} and~\ref{rem:ranked}, the distributions $\mu_{\unif}$ and $\mu_{\rk}$ are special cases of~$\mu_{m,\leq}$ and~$\mu_{m,<}$, respectively, so from now on we discuss computing~$\PP_{\mu_{m,\leq}}(c)$, $\PP_{\mu_{\lin}}(c)$, and $\PP_{\mu_{m,<}}(c)$. 

First let us consider $\mu = \mu_{m,\leq}$; the other distributions will be similar. Let $\mathrm{RPP}(\sigma;m)$ denote the set of reverse plane partitions of shape $\sigma$ and height $m$ (recalling the dictionary of terms above). Choose some outward corner~$c\in C(\sigma)$ that occurs $(i,j)$. Suppose first that $c$ is a southeast corner. Then
\[\PP_{\mu_{m,\leq}}(c) = \EE\left(\frac{m-\mathrm{max}\{\ell([i+1,j]),\ell([i,j+1])\}}{m}\right)\]
where $\ell \in \mathrm{RPP}(\sigma;m)$ is chosen uniformly at random. But
\[\EE(\mathrm{max}\{\ell([i+1,j]),\ell([i,j+1])\}) = m+1 - \frac{\#\mathrm{RPP}(\sigma\cup\{c\};m)}{\#\mathrm{RPP}(\sigma;m)}\]
where $\sigma \cup \{c\}$ denotes the skew shape obtained by adding a box at corner $c$ (i.e., for this southeast corner, $\sigma \cup \{c\} := \sigma \cup \{[i+1,j+1]\}$). Indeed, this follows from the same observation as Lemma 2.10 of~\cite{chan2015genera}: consider the map $\mathrm{RPP}(\sigma\cup\{c\};m) \to \mathrm{RPP}(\sigma;m)$ given by forgetting the value at $[i+1,j+1]$; for any $\ell \in \mathrm{RPP}(\sigma;m)$ the size of the fiber of this map at $\ell$ is $m+1 - \mathrm{max}\{\ell([i+1,j]),\ell([i,j+1])\}$. Now suppose that~$c$ is a northwest corner. Then
\[\PP_{\mu_{m,\leq}}(c) = \EE\left(\frac{\mathrm{min}\{\ell([i+1,j]),\ell([i,j+1])\}}{m}\right)\]
where $\ell \in \mathrm{RPP}(\sigma;m)$ is chosen uniformly at random. By the same reasoning as before,
\[\EE(\mathrm{min}\{\ell([i+1,j]),\ell([i,j+1])\}) = \frac{\#\mathrm{RPP}(\sigma\cup\{c\};m)}{\#\mathrm{RPP}(\sigma;m)} - 1.\]
Whether $c$ is a southeast or northwest corner, we see that
\begin{equation}
\PP_{\mu_{m,\leq}}(c) = \frac{\#\mathrm{RPP}(\sigma\cup\{c\};m)-\#\mathrm{RPP}(\sigma;m)}{m\cdot\#\mathrm{RPP}(\sigma;m)}.
\end{equation}
Similar analysis for the other distributions shows
\begin{equation}\label{eq:mulin}
\PP_{\mu_{\lin}}(c) = \frac{\#\mathrm{SYT}(\sigma\cup\{c\})}{(|\sigma|+1)\cdot \#\mathrm{SYT}(\sigma)}
\end{equation}
where $\mathrm{SYT}(\sigma)$ denotes the set of standard Young tableaux of shape $\sigma$ and $|\sigma|$ is the number of boxes in $\sigma$, and
\begin{equation}\PP_{\mu_{m,<}}(c) = \frac{\#\mathrm{Inc}(\sigma\cup\{c\};m)+\#\mathrm{Inc}(\sigma;m)}{(m+2)\cdot\#\mathrm{Inc}(\sigma;m)}.
\end{equation}
where $\mathrm{Inc}(\sigma;m)$ is the set of increasing tableaux of shape $\sigma$ and height $m$.

Thus we have reduced the problem of computing $\PP_{\mu}(c)$ for $\mu$ in the spectrum of toggle-symmetric distributions on $\J(\sigma)$ defined in Section~\ref{sec:pparts} to computing the quantities~$\#\mathrm{RPP}(\sigma;m)$, $\#\mathrm{SYT}(\sigma)$, and $\#\mathrm{Inc}(\sigma;m)$. Fortunately there are determinantal formulas for these. Let $\sigma = \lambda / \nu$ be a skew shape with $\lambda = (\lambda_1,\ldots,\lambda_k)$ and~$\nu = (\nu_1,\ldots,\nu_k)$. Then a result of Kreweras~\cite{kreweras1965sur} says that
\[\#\mathrm{RPP}(\sigma;m) = \det_{i,j=1}^{k}\left[\binom{\lambda_i - \nu_j + m}{i-j+m}\right]. \]
Here we interpret $\binom{x}{y} = 0$ for $x < 0$. In the special case $\mu = \emptyset$ the above formula was known to MacMahon~\cite[p. 243]{macmahon2004combinatory}; for more details see~\cite[Exercise 3.149]{stanley2012ec1}. The following formula is due to Aitken~\cite{aitken1943monomial} (although in fact it is a simple consequence of the Jacobi-Trudi identity; see~\cite[Corollary 7.16.3]{stanley1999ec2}):
\[ \#\mathrm{SYT}(\sigma) = |\sigma|! \det_{i,j=1}^{k} \left[\frac{1}{(\lambda_i - i -\nu_j +j)!}\right] \]
Here we interpret $\frac{1}{x!} = 0$ if $x < 0$. In the special case $\mu=\emptyset$ we also have the famous Hook-Length Formula, which gives an even better answer for the number of standard Young tableaux; namely,
\[\#\mathrm{SYT}(\lambda) = |\lambda|! \prod_{[i,j] \in \lambda} \frac{1}{h_{\lambda}(i,j)}\]
where $h_{\lambda}(i,j)$ is the hook-length of box $[i,j]$; see~\cite[Corollary 7.21.6]{stanley1999ec2} for details. As for increasing tableaux of bounded height, it follows from the Reciprocity Theorem for order polynomials (see~\cite[Corollary 3.15.12]{stanley2012ec1}) that if $\Omega$ is the unique polynomial satisfying~$\Omega(m) = \#\mathrm{RPP}(\sigma;m)$ for all $m \in \NN$ then~$\#\mathrm{Inc}(\sigma;m) = (-1)^{|\sigma|}\Omega(-m)$. Thus the aforementioned result of Kreweras also allows us to easily compute~$\#\mathrm{Inc}(\sigma;m)$.

\subsection{Connections to antichain cardinality homomesy} \label{ssec:antichain}

In this subsection we give an application of our main result to the study of homomesies in combinatorial maps. Recall the definitions of rowmotion, gyration, and homomesy from Section~\ref{sec:togglegpsym}.

Let $P$ be a poset. To any~$\mcO \in \J(P)$ we associate the antichain $A(\mcO)$ of $P$ consisting of the maximal elements of~$\mcO$. The \emph{antichain cardinality statistic} is the map~$\J(P) \to \RR$ given by~$\mcO \mapsto \#A(I)$.

\begin{cor} \label{cor:homomesy-general}
If $P$ is the poset associated to the skew shape $\sigma$ and $\mu$ is any toggle-symmetric distribution, then
$$\EE_{\mu}(\#A(I)) = \frac{ab}{a+b}\left( 1 + \sum_{c\in C(\sigma)} \!\delta(c)\,\PP_\mu(c)\right).$$
\end{cor}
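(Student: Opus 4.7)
The plan is to observe that the antichain cardinality statistic decomposes naturally in terms of the toggle-out indicator random variables, and then to leverage toggle-symmetry to relate $\EE_\mu(\#A(\mcO))$ to $\EE_\mu(\jag)$, at which point Theorem~\ref{thm:main} delivers the formula.

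First, I would note that by definition $A(\mcO)$ is the set of maximal elements of $\mcO$, which is exactly the set of boxes $p$ that can be toggled out of $\mcO$. Therefore, as a random variable on $\J(\sigma)$,
\[ \#A(\mcO) = \sum_{p \in P_\sigma} \T_p^{-}(\mcO). \]
Similarly, the jaggedness decomposes as $\jag = \sum_{p \in P_\sigma} (\T_p^+ + \T_p^-)$, as noted immediately after the definition of $\T_p^\pm$ in Section~\ref{sec:togglesym}.

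Next, since $\mu$ is toggle-symmetric, we have $\EE_\mu(\T_p^+) = \EE_\mu(\T_p^-)$ for every $p \in P_\sigma$, by the very definition of toggle-symmetry. Summing over $p$ and using linearity of expectation gives
\[ \EE_\mu(\#A(\mcO)) = \sum_{p \in P_\sigma} \EE_\mu(\T_p^-) = \tfrac{1}{2}\sum_{p \in P_\sigma}\bigl(\EE_\mu(\T_p^+) + \EE_\mu(\T_p^-)\bigr) = \tfrac{1}{2}\,\EE_\mu(\jag). \]

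Finally, I would apply Theorem~\ref{thm:main} to evaluate $\EE_\mu(\jag)$, yielding
\[ \EE_\mu(\#A(\mcO)) = \tfrac{1}{2} \cdot \frac{2ab}{a+b}\left(1 + \sum_{c \in C(\sigma)} \delta(c)\,\PP_\mu(c)\right) = \frac{ab}{a+b}\left(1 + \sum_{c \in C(\sigma)} \delta(c)\,\PP_\mu(c)\right), \]
which is the desired formula. There is no real obstacle here: all the work is packed into Theorem~\ref{thm:main}, and the only new ingredient is the trivial identification of $\#A$ with $\sum_p \T_p^-$ combined with the toggle-symmetry hypothesis to halve the jaggedness expectation.
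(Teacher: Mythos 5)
Your proposal is correct and follows the paper's own argument exactly: identify $\#A(\mcO)$ with $\sum_{p\in P_\sigma}\T_p^-$, use toggle-symmetry to see this equals $\tfrac12\EE_\mu(\jag)$, and then invoke Theorem~\ref{thm:main}. No differences worth noting.
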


\begin{proof}
This result was already obtained in the proof of Theorem~\ref{thm:main}.  Explicitly,
the antichain cardinality statistic is just $\sum_{p \in P_{\sigma}}\T^{-}_p$, so the average of this statistic is
\begin{align*}
\EE_{\mu}(\sum_{p \in P_{\sigma}}\T^{-}_p) &= \frac{1}{2}(\EE_{\mu}(\sum_{p \in P_{\sigma}}\T^{-}_p) + \EE_{\mu}(\sum_{p \in P_{\sigma}}\T^{+}_p)) \\
&= \frac{1}{2}\EE_{\mu}(\jag) \\
\end{align*}
where $\EE_{\mu}(\sum_{p \in P_{\sigma}}\T^{-}_p) = \EE_{\mu}(\sum_{p \in P_{\sigma}}\T^{+}_p)$ thanks to the toggle-symmetry of $\mu$.  Now apply Theorem~\ref{thm:main}.
\end{proof}

\begin{cor} \label{cor:homomesy}
For $P_{\sigma}$ the poset corresponding to a balanced skew shape $\sigma$ of height~$a$ and width~$b$ and $\varphi\in\{\mathrm{rowmotion,gyration}\}$, the antichain cardinality statistic is~$\frac{ab}{a+b}$-mesic with respect to the action of $\varphi$ on $\J(P_{\sigma})$.
\end{cor}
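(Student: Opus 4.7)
The plan is to combine Striker's Theorem~\ref{thm:striker} with Corollary~\ref{cor:homomesy-general}, so the proof will be quite short. First I would fix a $\varphi$-orbit $\Orb \subseteq \J(P_\sigma)$ and let $\mu_\Orb$ denote the uniform probability distribution supported on $\Orb$. Note that $P_\sigma$ is ranked (as observed right after its definition), so gyration is well-defined on $\J(P_\sigma)$ and Theorem~\ref{thm:striker} applies in both cases $\varphi = \mathrm{rowmotion}$ and $\varphi = \mathrm{gyration}$. Hence $\mu_\Orb$ is toggle-symmetric.

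Next I would invoke Corollary~\ref{cor:homomesy-general} with this toggle-symmetric measure to obtain
\[\EE_{\mu_\Orb}(\#A(\mcO)) = \frac{ab}{a+b}\left(1 + \sum_{c \in C(\sigma)} \delta(c)\,\PP_{\mu_\Orb}(c)\right).\]
Because $\sigma$ is balanced, $\delta(c) = 0$ for every $c \in C(\sigma)$, so the correction sum vanishes and the right-hand side collapses to $\frac{ab}{a+b}$. Unwinding the definition of $\mu_\Orb$, this says precisely that
\[\frac{1}{\#\Orb} \sum_{\mcO \in \Orb} \#A(\mcO) = \frac{ab}{a+b}.\]
Since $\Orb$ was an arbitrary $\varphi$-orbit, this exhibits the desired homomesy.

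There is essentially no obstacle: the entire content has already been established in Theorem~\ref{thm:striker} (toggle-symmetry of orbit-uniform distributions under rowmotion and gyration), Corollary~\ref{cor:homomesy-general} (the expected antichain cardinality formula for any toggle-symmetric distribution on $\J(P_\sigma)$), and the definition of balanced (vanishing of all displacements). The only step requiring any care is noting that the two hypotheses of Theorem~\ref{thm:striker} are met for gyration, which follows because every skew shape poset $P_\sigma$ is ranked by $[i,j] \mapsto i+j-\kappa$.
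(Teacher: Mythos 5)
Your proof is correct and matches the paper's own argument: both fix a $\varphi$-orbit, apply Theorem~\ref{thm:striker} to get toggle-symmetry of the orbit-uniform distribution, and then apply Corollary~\ref{cor:homomesy-general} together with the vanishing of the displacements for a balanced shape (which is exactly the content of Corollary~\ref{cor:main}). Your additional remark verifying that $P_\sigma$ is ranked so that gyration is defined is a welcome bit of care the paper leaves implicit.
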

\begin{proof}
Let $\Orb \subseteq \J(P_{\sigma})$ be a $\varphi$-orbit and let $\mu$ be the distribution on $\J(P_{\sigma})$ that is uniform on $\Orb$. By Theorem~\ref{thm:striker} we know that $\mu$ is toggle-symmetric. Thus by Corollaries~\ref{cor:main} and~\ref{cor:homomesy-general} we conclude that $\EE_{\mu}(\#A(I)) = \frac{ab}{a+b}$.
\end{proof}

In the case where $\varphi = \mathrm{rowmotion}$ and $\sigma$ is an $a \times b$ rectangle, Corollary~\ref{cor:homomesy} recovers a result of Propp and Roby~\cite[Theorem 27]{propp2013homomesy}. Actually, Propp and Roby prove a more refined result: they show the cardinality of the intersection of the antichain with any fixed ``fiber'' of~$P_{\sigma}$ is homomesic with respect to rowmotion in this rectangular case. In other words, they show that the statistics~$\sum_{[i,j']\in\sigma}T^{-}_{i,j'}$ for~$1 \leq i \leq a$ and~$\sum_{[i',j]\in\sigma}T^{-}_{i',j}$ for~$1 \leq j \leq b$ are homomesic with respect to the action of rowmotion on~$\J(P_{\sigma})$. But when $\sigma$ is an $a \times b$ rectangle and $1 \leq i < a$ we have
\[ \sum_{[i,j']\in\sigma}T^{-}_{i,j'} = \sum_{[(i+1),j']\in\sigma}T^{+}_{(i+1),j'} \]
and similarly for columns. Thus by the toggle-symmetry of $\mu$, where $\mu$ is as in the proof of Corollary~\ref{cor:homomesy}, we conclude that in this case
\[ \EE_{\mu}\left(\sum_{[i_1,j']\in\sigma}T^{-}_{i_1,j'}\right) = \EE_{\mu}\left(\sum_{[i_2,j']\in\sigma}T^{-}_{i_2,j'}\right) \]
for any $1 \leq i_1,i_2 \leq a$, and similarly for columns. In this way we can recover Propp and Roby's refined fiber cardinality result as well. This argument also shows that fiber cardinality is homomesic for gyration acting on rectangular shapes. (But note that the fiber cardinality homomesy does not hold for arbitrary balanced shapes.) At any rate, for non-rectangular, balanced $\sigma$ when $\varphi = \mathrm{rowmotion}$, and for all balanced~$\sigma$ when~$\varphi=\mathrm{gyration}$, the antichain cardinality homomesy result of Corollary~\ref{cor:homomesy} appears to be new.

\section{Open questions}\label{sec:open}

We conclude with some open questions and possible threads of future research.

\begin{enumerate}

\item For any poset $P$, the space of toggle-symmetric distributions on $\J(P)$ is some convex polytope. Denote this polytope by $\Poly(P)$. What is the combinatorial structure of~$\Poly(P)$? Note that $\Poly(P)$ has dimension $\#\J(P) - 1 - \#P$: specifically, it is the intersection of the standard $\#\J(P)$-simplex in $\RR^{\#\J(P)}$ with some linear subspace of codimension~$\#P$, and the uniform distribution on~$\J(P)$ is an interior point of the simplex that is always toggle-symmetric. It seems that $\Poly(P)$ can be rather complicated; for example, computation with Sage mathematical software shows that when~$P_{\lambda}$ is the poset corresponding to the partition~$\lambda = (3,3,3)$ the polytope~$\Poly(P_{\lambda})$ is~$10$-dimensional and has~$159$ vertices. For a specific question about~$\Poly(P)$: are the distributions corresponding to $\varphi$-orbits for~$\varphi\in\{\mathrm{rowmotion,gyration}\}$ always vertices of $\Poly(P)$?

\item Rowmotion and gyration are both elements of the toggle group; moreover, they are both compositions of all of the toggles in some order. Not all such compositions of toggles are $0$-mesic with respect to $\T_p$ for all $p \in P$; for instance, Striker~\cite[\S6]{striker2015toggle} observes an instance where this fails for \emph{promotion}, another such element of the toggle group. Nevertheless, we could hope that there were some other toggle group elements $\varphi\colon \J(P) \to \J(P)$ that are $0$-mesic with respect to $\T_p$ for all~$p \in P$. It would be interesting to find such $\varphi$ because then Corollary~\ref{cor:main} would immediately imply that the antichain cardinality statistic is homomesic with respect to~$\varphi$.

\item For a connected skew shape $\sigma$ with height $a$ and width $b$ and any $\rho \in \J(\sigma)$ we claim that
\[ 1 \leq \jag(\rho) \leq \mathrm{min}\{2a,2b,a+b-1\}<\frac{4ab}{a+b}.\]
To see this, first note that either $\rho$ is nonempty or $\sigma \setminus \rho$ is nonempty and so there is at least one box of $\sigma$ that can be toggled in or out, proving~$1 \leq \jag(\rho)$. Next note that in each column, at most one box can be toggled in and at most one out, and similarly for rows. This proves $ \jag(\rho) \leq \mathrm{min}\{2a,2b\}$. The only case where $a+b-1 <  \mathrm{min}\{2a,2b\}$ is when $a=b$; in this case, note that if a box can be toggled out of every column, then there is no box in the first column that can be toggled in. So indeed the claimed inequality on $\jag(\rho)$ holds. The upshot of this inequality is that for any distribution $\mu$ on~$\J(\sigma)$,
\[0 < \EE_{\mu}(\jag) < \frac{4ab}{a+b}. \]
If $\mu$ is toggle-symmetric then by Theorem~\ref{thm:main} we conclude
\[-1 < \sum_{c \in C(\sigma)}\delta(c)\PP_{\mu}(c) < 1.\]
It is not obvious a priori that this bound on $\sum_{c \in C(\sigma)}\delta(c)\PP_{\mu}(c)$ should hold for all toggle-symmetric distributions $\mu$. It would be interesting to give a simple explanation for why it does hold, or to offer another expression for $\EE_{\mu}(\jag)$ that is evidently strictly between $0$ and $\frac{4ab}{a+b}$.

A related question, pointed out by N.~Pflueger, is to give a direct explanation for why, for any {\em balanced} skew shape $\sigma$ and any toggle-symmetric distribution, the expected jaggedness of a subshape necessarily lies between $a$ and $b$.  (This is true, of course, since the harmonic mean always lies between $a$ and $b$.)

\item Our main theorem, Theorem~\ref{thm:main}, which gives a formula for $\EE_{\mu}(\jag)$ for toggle-symmetric distributions $\mu$, applies only to posets associated to skew shapes. Can we generalize this result to a broader class of posets? In particular, is there a more general notion of a ``balanced'' poset for which all toggle-symmetric distributions have the same expected jaggedness?

\end{enumerate}

\bibliography{main}{}
\bibliographystyle{plain}

\end{document}